\theoremstyle{plain}
\newtheorem{theorem}{Theorem}[section]
\newtheorem{lemma}[theorem]{Lemma}
\newtheorem{proposition}{Proposition}
\theoremstyle{remark}
\newtheorem{conjecture}[theorem]{Conjecture}
\newcommand{\RR}{{\mathbb R}}
\newcommand{\QQ}{{\mathbb Q}}
\newcommand{\ZZ}{{\mathbb Z}}
\newcommand{\NN}{{\mathbb N}}
\newcommand{\cF}{{\mathcal F}}
\newcommand{\cP}{{\mathcal P}}
\newcommand{\cW}{{\mathcal W}}
\def\eps{\varepsilon}
\author{Amie Wilkinson}
\address{Northwestern University, Evanston, Illinois, USA.}
\title{Smooth ergodic theory}
\begin{document}
\maketitle

\tableofcontents

\section*{Glossary}

\noindent{\bf Conservative, Dissipative:} Conservative dynamical systems (on a
compact phase space) are those that preserve a finite  measure equivalent to volume.  
Hamiltonian dynamical systems are important examples of conservative systems.  
Systems that are not conservative are called dissipative. Finding 
physically meaningful invariant measures for dissipative maps is 
a central object of study in smooth ergodic theory.

\medskip

\noindent{\bf Distortion estimate:} A key technique in smooth ergodic theory,
a distortion estimate for a smooth map $f$ gives a bound  on
the variation of the jacobian of $f^n$ in a given region, for 
$n$ arbirtarily large.  The jacobian of a smooth map at a point
$x$ is the absolute value of the determinant of derivative at $x$, measured 
in a fixed Riemannian metric. The jacobian measures the distortion of volume under $f$ in that metric.

\medskip

\noindent{\bf Hopf Argument:} A technique developed by Eberhard Hopf for proving
that a conservative diffeomorphism or flow is ergodic.  The argument relies on the Ergodic Theorem
for invertible transformations, the density of continuous functions among integrable
functions, and the existence of stable and unstable foliations for the system.
The argument has been used, with various modifications, to establish ergodicity for
hyperbolic, partially hyperbolic and nonuniformly hyperbolic systems.

\medskip

\noindent{\bf Hyperbolic:}  A compact invariant set $\Lambda\subset M$ for a diffeomorphism
$f\colon M\to M$ is hyperbolic if, at every point in $\Lambda$, the tangent space
splits into two subspaces, one that is uniformly contracted by the derivative
of $f$, and another that is uniformly expanded. Expanding 
maps and Anosov diffeomorphisms are examples of globally hyperbolic maps.
Hyperbolic diffeomorphisms and flows are the archetypical smooth systems 
displaying chaotic behavior, and their dynamical properties are well-understood.  
Nonuniform hyperbolicity and partial hyeprbolicity are two generalizations
of hyperbolicity that encompass a broader class of systems and display
many of the chaotic features of hyperbolic systems.

\medskip

\noindent{\bf Sinai-Ruelle-Bowen (SRB) measure:} The concept of SRB measure is a rigorous
formulation of what it means for an invariant measure to be ``physically meaningful.''
An SRB measure attracts a large set of orbits into its support, and its statistical
features are reflected in the behavior of these attracted orbits.

\section*{Definition and Importance of the Subject}

{\em Smooth ergodic theory} is the study of the statistical and geometric properties of measures invariant
under a smooth transformation or flow.  The study of smooth ergodic theory is as old as 
the study of abstract ergodic theory, having its origins in Bolzmann's Ergodic Hypothesis
in the late 19th Century.  As a response to Boltzmann's hypothesis, which was formulated in the
context of Hamiltonian Mechanics, Birkhoff and von Neumann defined ergodicity in the 1930's and proved
their foundational ergodic theorems.  The study of ergodic properties of smooth systems
saw an advance in the work of Hadamard and E. Hopf in the 1930's their study of geodesic flows
for negatively curved surfaces.  Beginning in the 1950's, Kolmogorov, Arnold and Moser developed
a perturbative theory producing obstructions to ergodicity in Hamiltonian systems, known as 
Kolmogorov-Arnold-Moser (KAM) Theory.  Beginning in the 1960's with the work of Anosov and
Sinai on hyperbolic systems, the study of smooth ergodic theory has seen intense activity.
This activity continues today, as the ergodic properties of systems displaying weak forms of
hyperbolicity are further understood, and KAM theory is applied in increasingly broader contexts.


\section{Introduction}

This entry  focuses on the basic arguments and principles 
in smooth ergodic theory,
illustrating with simple and straightforward examples.  
The classic texts \cite{arnoldavez, mane} are a good supplement.

The discussion here sidesteps the topic of 
Kolmogorov-Arnold-Moser (KAM) theory, which has
played an important role in the development of smooth ergodic theory;
see the entry in this volume.  For reasons of space,
detailed discussion of several active areas in smooth ergodic theory
is omitted, including: 
higher mixing properties (Kolmogorov, Bernoulli, etc.),
finer statistical properties (fast decay of correlations, 
Central Limit Theorem, large deviations),
smooth thermodynamic formalism (transfer operators, 
pressure, dynamical zeta functions, etc.), the smooth
ergodic theory of random dynamical systems, 
as well as any mention of infinite invariant measures. 
The text \cite{baladi} covers many
of these topics, and the texts \cite{kifer1,kifer2,liu} treat
random smooth ergodic theory in depth. An excellent discussion of many 
of the recent developments in the field of smooth ergodic theory is \cite{bdv}.

This entry assumes knowledge of the basic concepts in 
ergodic theory and of basic differential topology.  
The texts \cite{cfs} and \cite{hirsch} contain the necessary background.

\section{The volume class}

For simplicity, assume that $M$ is a compact, boundaryless 
$C^\infty$ Riemannian manifold,
and that $f\colon M\to M$ is an orientation-preserving, 
$C^1$ map satisfying $m(D_x f)>0$,
for all $x\in M$, where
$$m(D_xf) = \inf_{v\in T_xM, \|v\|=1} \|D_xf(v)\|.$$
If $f$ is a diffeomorphism, then this assumption is automatically satisfied, since 
in that case $m(D_xf) = \|D_{f(x)} f^{-1}\|^{-1} > 0$.  For non-invertible maps, this assumption is essential in much of the following discussion. The Inverse Function Theorem implies that
any map $f$ satisfying these hypotheses is a covering map of positive degree $d\geq 1$.

These assumptions will avoid the issues of infinite measures and 
the behavior of $f$ near critical points and singularities of the derivative.  
For most results discussed in this entry, this assumption is not too restrictive.  The existence of critical points and other singularities is, however, a complication that cannot be avoided in many important applications.  The ergodic-theoretic analysis of such examples can be considerably more involved, but contains many of the elements discussed in this entry.  The discussion in
Section~\ref{s.critical} indicates
how some of these additional technicalities arise and can be overcome. 
For simplicity, the discussion here is confined almost exclusively to discrete 
time evolution. Many, though not all, of the 
the results mentioned here carry over to flows and semiflows using, 
for example, a cross-section construction (see \cite{mane}, Chapter 1).

Every smooth map $f\colon M\to M$ satisfying these hypotheses preserves a natural  measure {\em class}, the measure class
of a finite, smooth Riemannian volume on $M$.  Fix such a volume $\nu$ on $M$.  Then there exists a
continuous, positive {\em jacobian} function $x\mapsto \hbox{jac}_x f$ on $M$, with the property that
for every sufficiently small ball $B\subset M$, and every measurable set $A\subset B$ one has:
$$\nu(f(A)) = \int_B \hbox{jac}_x f \,d\nu(x).$$ 
The jacobian of $f$ at $x$ is none other than the
absolute value of the determinant of the derivative $D_xf$ (measured in the given Riemannian metric).
To see that the measure class of $\nu$ is preserved by $f$, observe that the Radon-Nikodym derivative
$\frac{d f_\ast\nu}{d\nu}(x)$ at $x$ is equal to $\sum_{y\in f^{-1}(x)} (\hbox{jac}_y f)^{-1} > 0$.
Hence $f_\ast\nu$ is equivalent to $\nu$, and $f$ 
preserves the measure class of $\nu$.

In many contexts, the map $f$ has a natural {\em invariant} measure 
in the measure class of volume. In this case, $f$ is said to be 
{\em conservative}.  One setting in which a natural invariant
smooth measure appears is Hamiltonian dynamics.  Any solution to Hamilton's equations preserves
a smooth volume called the {\em Liouville measure}.  Furthermore, along the invariant, constant energy hypermanifolds
of a Hamiltonian flow, the Liouville measure decomposes smoothly into invariant measures,
each of which is equivalent to the induced Riemannian volume. In this way, many systems of physical or geometric origin, such as billiards, geodesic flows, hard sphere gases, and
evolution of the $n$-body problem give rise to smooth conservative dynamical systems. See the entry
on Dynamics of Hamiltonian Systems.

Note that even though $f$ preserves a smooth measure class, it might not
preserve any measure in that measure class.  Consider, for example, a diffeomorphism
$f\colon S^1\to S^1$ of the circle with exactly two fixed points, $p$ and $q$, $f'(p)>1>f'(q)>0$.
Let $\mu$ be an $f$-invariant probability measure.  Let $I$ be a neighborhood of $p$.
Then $\bigcap_{n=1}^\infty f^{-n}(I) = \{p\}$, but on the other hand, $\mu(f^{-n}(I))=\mu(I)>0$,
for all $n$.  This implies that $\mu(\{p\})>0$, and so $\mu$ does not lie
in the measure class of volume.  This is an example of a dissipative map.  A
map $f$ is called
{\em dissipative} if every $f$-invariant measure with full support
has a singular part with respect to volume. As was just seen, if a 
diffeomorphism $f$ has a periodic sink, then $f$ is dissipative; more 
generally, if a diffeomorphism
$f$ has a periodic point $p$ of period $k$ such that $\hbox{jac}_p f^k \neq 1$, then $f$ is dissipative.

\section{The fundamental questions}

For a given smooth map $f\colon M\to M$, there are 
the following fundamental questions. 

\begin{enumerate}

\item Is $f$ conservative? That is, does there exist an invariant measure in the class of volume? If so, is it unique?

\item When $f$ is conservative, what are its statistical properties?  Is it ergodic,
mixing, a K-system, Bernoulli, etc?  Does it obey a Central Limit Theorem, fast decay of correlations, 
large deviations estimates, etc?

\item If $f$ is dissipative, does there exist an invariant measure, not in the class of volume, but (in some
sense) natural with respect to volume?  What are the statistical properties of such a measure, if it exists?

\end{enumerate}

There are several plausible ways to ``answer'' these questions.  One might fix a given map $f$ of
interest and ask these questions for that specific $f$. What tends to happen 
in the analysis of a single map $f$ is that either:

\begin{itemize}
\item the question can be answered
using ``soft'' methods, and so the answer applies not only to $f$ but to perturbations of $f$,
or even to {\em generic} or {\em typical} $f$ inside a class of maps; or
\item the proof requires ``hard'' analysis or precise asymptotic information
and cannot possibly be answered for a specific 
$f$, but can be answered for a large set of $f_t$ in a typical
(or given) parametrized family $\{f_t\}_{t\in (-1,1)}$ of smooth maps containing $f=f_0$.
\end{itemize}
Both types of results appear in the discussion that follows.

\section{Lebesgue measure and local properties of volume}

Locally, any measure in the measure class of volume is, 
after a smooth change of coordinates,
equivalent to Lebesgue measure in $\RR^n$.  In fact, more is true: Moser's Theorem implies that 
locally any Riemannian volume
is, after a smooth change of coordinates, {\em equal} to Lebesgue measure in $\RR^n$. 
Hence to study many of the local properties of volume, it suffices
to study the same properties for Lebesgue measure.  

One of the basic properties of Lebesgue measure is that every set of positive Lebesgue measure
can be approximated arbitrarily well in measure from the outside by an open set, and from
the inside by a compact set.  A consequence of this property, of fundamental importance
in smooth ergodic theory, is the following statement.

\medskip

\noindent{\em  {\bf \em Fundamental Principle $\#$1:} Two disjoint, positive Lebesgue measure sets cannot mix together uniformly at all scales.}

\medskip

As an illustration of this principle, consider the following elementary exercise in measure theory. First, some notation. If $\nu$ is a measure and 
$A$ and $B$ are $\nu$-measurable sets with $\nu(B) >0$,
the {\em density of $A$ in $B$} is defined by: 
 $$
 \nu(A\colon B) = \frac{\nu(A\cap B)}{\nu(B)}.
 $$ 

\begin{proposition}\label{p.density} 
Let ${\mathcal P}_1, {\mathcal P}_2, \ldots$ be sequence of (mod 0) finite partitions of the circle
$S^1$ into open intervals, with the properties: a) any element of ${\mathcal P}_n$ is a (mod 0) union of elements of ${\mathcal P}_{n+1}$, and b) the maximum diameter of elements of ${\mathcal P}_n$ tends to $0$ as $n\to\infty$.

Let $A$ be any set of positive Lebesgue measure in $S^1$.
Then there exists a sequence of intervals $I_1, I_2, \ldots$, with $I_n\in {\mathcal P}_n$
such that:
$$ \lim_{n\to\infty}\lambda(A\colon I_n)=1.$$
\end{proposition}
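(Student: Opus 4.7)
The plan is, for each $n$, to take $I_n \in \mathcal{P}_n$ to be an element maximizing $\lambda(A\colon I_n)$ and to show that this maximum tends to $1$. Intuitively this realizes Fundamental Principle \#1: if no element of $\mathcal{P}_n$ ever isolated $A$ from $A^c$ at fine scales, the two sets would be diffusely mixed at arbitrarily small scales, which outer regularity of Lebesgue measure forbids.

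The first step is to apply outer regularity. Given $\varepsilon > 0$, choose an open set $U \supseteq A$ with $\lambda(A) > (1-\varepsilon)\lambda(U)$. Since $U$ is open in $S^1$, it is a countable disjoint union of open intervals $U = \bigsqcup_k J_k$. Then
$$\frac{\sum_k \lambda(A \cap J_k)}{\sum_k \lambda(J_k)} \;=\; \frac{\lambda(A)}{\lambda(U)} \;>\; 1 - \varepsilon,$$
so by the weighted-average principle (if $\sum a_k / \sum b_k > c$ with $b_k>0$, some $a_k/b_k > c$) at least one $J := J_{k_0}$ satisfies $\lambda(A\colon J) > 1 - \varepsilon$.

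The second step is to descend from $J$ to an element of $\mathcal{P}_n$. Let $\delta_n$ denote the maximum diameter of an element of $\mathcal{P}_n$, so $\delta_n \to 0$ by hypothesis (b). Fix $n$ large enough that $\delta_n < \varepsilon\,\lambda(J)$, and let $I_1,\dots,I_m$ be the elements of $\mathcal{P}_n$ contained in $J$. These cover $J$ except for at most two boundary elements of $\mathcal{P}_n$ that overlap $J$ only partially, whose total measure is at most $2\delta_n$. Hence $\sum_j \lambda(I_j) \geq \lambda(J) - 2\delta_n$ and $\sum_j \lambda(A \cap I_j) \geq \lambda(A \cap J) - 2\delta_n$, and the same averaging argument produces some $I = I_{j_0}$ with
$$\lambda(A\colon I) \;\geq\; \frac{(1-\varepsilon)\lambda(J) - 2\delta_n}{\lambda(J)} \;\geq\; 1 - 3\varepsilon.$$

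Thus for every $\varepsilon > 0$ and every sufficiently large $n$, some element of $\mathcal{P}_n$ has density of $A$ at least $1 - 3\varepsilon$, so $\max_{I \in \mathcal{P}_n}\lambda(A\colon I) \to 1$; picking $I_n$ to be a maximizer (and $I_n$ arbitrary for the finitely many small $n$) completes the proof. The only subtlety is the boundary effect in passing from $J$ to the $\mathcal{P}_n$-intervals inscribed in $J$, which is controlled simply by taking $n$ large enough that $\delta_n \ll \lambda(J)$; no deeper tool is needed, although the Lebesgue differentiation theorem gives an alternative route via density points of $A$.
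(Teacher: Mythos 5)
Your proof is correct, and it takes a genuinely different route from the paper's. The paper works with the complement: it fixes, once and for all, a mod $0$ cover of $S^1\setminus A$ by pairwise disjoint intervals drawn from $\bigcup_n\mathcal{P}_n$ of total measure $<1$, groups these intervals by the level they come from, and for each $n$ uses an averaging argument to pick $I_n\in\mathcal{P}_n$, disjoint from the intervals of level $\le n$, in which the density of the remaining finer intervals is at most the tail ratio $a_n$ of a convergent series; since $(S^1\setminus A)\cap I_n$ lies mod $0$ in that remainder, $\lambda(A\colon I_n)\ge 1-a_n\to 1$. That construction leans on the nesting hypothesis (a), both to produce the disjoint cover and to know that the complement of the discarded set $V_n$ is itself a mod $0$ union of $\mathcal{P}_n$-elements over which one can average. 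You instead approximate $A$ itself from outside by an open set, locate a single interval $J$ with $\lambda(A\colon J)>1-\varepsilon$ by the same averaging principle, and then refine $J$ by $\mathcal{P}_n$ for $n$ large, paying only the $2\delta_n$ boundary correction. This buys two things: hypothesis (a) is never used, so your argument proves the statement for any sequence of finite mod $0$ interval partitions with mesh tending to $0$, and you obtain the slightly stronger conclusion that $\max_{I\in\mathcal{P}_n}\lambda(A\colon I)\to 1$. The paper's bookkeeping, in exchange, produces $I_n$ disjoint from a prescribed exhaustion of $A^c$, a feature closer to the way such density lemmas are exploited with dynamically defined partitions. Two harmless points to tidy: choose $U$ with $\lambda(U)<1$ as well, so that $U\neq S^1$ and its decomposition into proper arcs is legitimate (if $\lambda(A)=1$ the claim is trivial), and restrict to $\varepsilon<1/3$, which both makes the bound $1-3\varepsilon$ meaningful and guarantees that at least one element of $\mathcal{P}_n$ is actually contained in $J$ once $\delta_n<\varepsilon\,\lambda(J)$.
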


\begin{proof} Assume that Lebesgue measure has been normalized
so that $\lambda(S^1) = 1$. Fix a (mod 0) cover of 
$S^1\setminus A$ by pairwise disjoint elements \{$J_i$\} 
of the union $\bigcup_{n=1}^\infty {\mathcal P}_n$ with the properties:
$$\lambda(J_1)\geq \lambda(J_2) \geq \cdots, \hbox{and}$$ 
$$\lambda(\bigcup_{i=1}^\infty J_i) = \sum_{i=1}^\infty \lambda(J_i) < 1.$$
For $n\in \NN$, let $U_n$ be the union of all the intervals $J_i$ that are contained in ${\mathcal P}_n$, and let $V_n = \bigcup_{i=1}^n U_n$. This
defines an increasing sequence of natural numbers $i_1 = 1 < i_2 < i_3 < \cdots$ such that
$U_n = \bigcup_{i=i_n}^{i_{n+1}-1}J_n$ and $V_n = \bigcup_{i=1}^{i_{n+1}-1} J_n$.
  
For each $n$, the interval $I_n$ will be chosen 
to be an element ${\mathcal P_n}$, disjoint from $V_n$,
 in which the density of $\bigcup_{i=n+1}^\infty U_{i}$ 
is very small (approaching $0$ as
$n\to \infty$).  Since $(S^1\setminus A)\cap I_n$
is contained in $\bigcup_{i=n+1}^\infty U_{i}$, this choice of 
$I_n$ will ensure that the 
density of $A$ in $I_n$ is large (approaching $1$ as $n\to \infty$).  

To make this choice of $I_n$, note first that
the density of $\bigcup_{i=n+1}^\infty U_{i}$ inside of $S^1\setminus V_n$ is: 
$$\frac{\lambda(\bigcup_{i=n+1}^\infty U_{i} )}{\lambda(S^1\setminus V_n)} = 
\frac{\sum_{i=i_{n+1}}^\infty \lambda(J_i)}{1- \sum_{i=1}^{i_{n+1}-1} \lambda(J_i)}
 =a_n.$$
Note that, since $\sum_{i=1}^\infty \lambda(J_i) < 1,$ one has 
$a_n\to 0$ as $n\to \infty$.
Since the density of $\bigcup_{i=n+1}^\infty U_{i}$ inside of $S^1\setminus V_n$ is
at most $a_n$,  there is an interval $I_n$ in ${\mathcal P}_n$, disjoint from $V_n$, such that the density of
$\bigcup_{i=n+1}^\infty U_{i}$ inside of $I_n$ is at most $a_n$.  Then 
$$\lim_{n\to\infty}\lambda(A\colon  I_n) \geq \lim_{n\to\infty} 1-a_n = 1.$$
\end{proof}

In smooth ergodic theory, it is often useful to use a variation on Proposition~\ref{p.density} (generally,
in higher dimensions) in which the partitions ${\mathcal P}_n$ are nested, dynamically-defined partitions.
A simple application of this method can be used to prove that the doubling map on the circle
is ergodic with respect to Lebesgue measure, which is done in Section~\ref{s.ergexamples}.

Notice that this proposition does not claim that the intervals 
$I_n$ are nested.  If one imposes
stronger conditions on the partitions ${\mathcal P}_n$, then one can draw stronger conclusions.

A very useful theorem in this respect is the Lebesgue Density Theorem.
A point $x\in M$ is a {\em Lebesgue density point} 
of a measurable set $X\subseteq M$ if
$$\lim_{r\to 0} m(X\colon  B_r(x)) = 1,$$
where $B_r(x)$ is the Riemannian ball of radius $r$ centered at $x$.
Notice that the notion of Lebesgue density point
depends only on the smooth structure of $M$, because
any two Riemannian metrics have the same Lebesgue density points.
 The Lebesgue Density Theorem states that if $A$ is a measurable
set and $\widehat A$ is the set of Lebesgue density points of $A$,
then $m(A\,\Delta\, \widehat A) = 0$.

\section{Ergodicity of the basic examples}\label{s.ergexamples}

This section contains proofs of 
the ergodicity of two basic examples of conservative
smooth maps: irrational rotations on the circle and the doubling map on the circle.  See the entry
Ergodic Theory: Basic Examples and Constructions for a more detailed description of
these maps.  These proofs serve as an elementary illustration of some of the fundamental techniques
and principles in smooth ergodic theory.

\bigskip

\noindent{\bf Rotations on the circle.} Denote by $S^1$ the circle $\RR/\ZZ$, which is an additive group,
and by $\lambda$ normalized Lebesgue-Haar measure on $S^1$.  Fix a real number $\alpha\in\RR$.  The rotation $R_\alpha\colon S^1\to S^1$ is the translation defined by $R_\alpha(x) = x+\alpha$.
Since translations preserve Lebesgue-Haar measure, the map $R_\alpha$ is conservative.  Note that $R_\alpha$ is
a diffeomorphism and an isometry with respect to the canonical flat metric (length) on $S^1$.

\begin{proposition} If $\alpha\notin \QQ$, then the rotation $R_\alpha\colon S^1\to S^1$ is ergodic
with respect to Lebesgue measure.
\end{proposition}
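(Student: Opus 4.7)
The plan is to show that any $R_\alpha$-invariant measurable set $A \subseteq S^1$ with $\lambda(A) > 0$ has $\lambda(A) = 1$. Two routes come to mind: a Hopf-style density-point argument using Fundamental Principle $\#1$, and a Fourier-series proof. I would foreground the density argument since it dovetails with the preceding section and generalizes more readily.

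The first step is to fix $\epsilon > 0$ and use the Lebesgue Density Theorem to produce an interval $I$ centered at a density point of $A$ with $\lambda(A:I) > 1-\epsilon$. Because $R_\alpha$ is an isometry preserving $\lambda$ and $A$ is $R_\alpha$-invariant \modzero, the bound $\lambda(A : R_\alpha^k(I)) > 1-\epsilon$ transports to every integer translate of $I$. The second step exploits the irrationality of $\alpha$ to select integers $k_1, \ldots, k_N$ so that the translates $R_\alpha^{k_i}(I)$ cover $S^1$ with uniformly bounded multiplicity $M$, independent of $\epsilon$. A standard input here is a continued-fraction denominator $q \sim 1/\lambda(I)$: by the three-distance theorem the first $q$ orbit points $\{k\alpha \bmod 1 : 0 \leq k < q\}$ partition $S^1$ into arcs of order $1/q$, so $q$ translates cover $S^1$ with multiplicity bounded by an absolute constant. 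Combining the covering inequality $\lambda(A^c) \leq \sum_i \lambda(A^c \cap R_\alpha^{k_i}(I))$ with the density estimate $\lambda(A^c \cap R_\alpha^{k_i}(I)) < \epsilon\lambda(I)$ and the total-measure bound $N\lambda(I) \leq M$ yields $\lambda(A^c) < M\epsilon$; sending $\epsilon \to 0$ finishes the proof.

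The main obstacle is the covering step, which needs the quantitative equidistribution of $\{k\alpha\}$ supplied by the three-distance theorem or continued fractions; soft density of orbits alone is not enough, because one must control overlaps. A conceptually cleaner alternative is the Fourier route: if $f \in L^2(S^1)$ is $R_\alpha$-invariant, comparing Fourier coefficients in $f = \sum_n c_n e^{2\pi i n x}$ forces $c_n(e^{2\pi i n \alpha}-1) = 0$ for every $n$, and since $\alpha \notin \QQ$ only $c_0$ survives; applied to $f = \chi_A$ this gives $\lambda(A) \in \{0,1\}$.
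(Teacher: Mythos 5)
Your argument is correct, but it is a genuinely different route from the paper's. The paper assumes $0<\lambda(A)<1$ and uses its density proposition to produce \emph{two} intervals of equal length, one where $A$ has density $>1-\eps$ and one where $A^c$ has density $>1-\eps$; since $R_\alpha$ has a dense orbit and is an isometry, some iterate carries the first interval almost exactly onto the second, and comparing densities gives a contradiction. This uses nothing about $R_\alpha$ beyond its being a transitive isometry, which is exactly the point: the same proof yields the theorem stated immediately afterwards, that every transitive isometry of a Riemannian manifold is ergodic with respect to volume. Your covering argument instead proves $\lambda(A)>0\Rightarrow\lambda(A)=1$ directly, but at the price of quantitative, rotation-specific input (continued fractions and the three-distance theorem) to control overlaps --- precisely the ``hard'' information the paper's soft two-interval trick is designed to avoid; note also a small ordering issue: for Liouville $\alpha$ there need be no denominator $q$ with $q\sim 1/\lambda(I)$, so you should first fix a large denominator $q_n$ and only then take $I$ centered at the density point with $\lambda(I)$ comparable to $1/q_n$, which the Lebesgue Density Theorem permits. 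Your Fourier alternative is also correct and is the cleanest proof of this particular statement, but, being based on characters of $S^1$, it neither generalizes to arbitrary transitive isometries nor illustrates the Lebesgue-density principles the section is built around.
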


\begin{proof}  Let $A$ be an $R_\alpha$-invariant set in $S^1$, and suppose that $0 <\lambda(A) < 1$. Denote by
$A^c$ the complement of $A$ in $S^1$.
Fix $\eps>0$.  Proposition~\ref{p.density} implies that there exists an interval $I\subset S^1$ such that 
the density of $A$ in $I$ is large: $\lambda(A\colon I) > 1 - \eps$.  
Similarly,
one may choose an interval $J$ such that $\lambda(A^c\colon  J) > 1 - \eps$.
Without loss of generality, one may choose $I$ and $J$ to have the same length.
Since $\alpha$ is irrational, $R_\alpha$ has a dense orbit, which meets the interval $I$.
Since $R_\alpha$ is an isometry, this impies that there is an integer $n$ such 
that $\lambda(R_\alpha^n(I)\,\Delta\, J) < \eps \lambda(I)$.  Since $\lambda(I)=\lambda(J)$, this readily
implies that $|\lambda(A\colon  R_\alpha^n(I)) - \lambda(A\colon J)| < \eps$. Also, since $A$ is invariant,
and $R_\alpha$ is invertible and preserves measure, 
one has: $$\lambda(A\colon  R_\alpha^n(I)) = \lambda(R_\alpha^n(A)\colon  R_\alpha^n(I)) = \lambda(A\colon I)>1-\eps.$$
But for $\eps$ sufficiently small, this contradicts the facts that
$\lambda(A\colon J) = 1- \lambda(A^c\colon J) < \eps$ and $|\lambda(A\colon  R_\alpha^n(I)) - \lambda(A\colon J)| < \eps$.
\end{proof}

Note that this is not a proof of 
the strongest possible statement about $R_\alpha$ (namely, minimality and unique ergodicity).  The point here is to show how ``soft'' arguments are often sufficient to establish ergodicity; this proof uses 
no more about $R_\alpha$ than the fact that it is a transitive isometry.
Hence the same argument shows:
\begin{theorem} Let $f\colon M\to M$ be a transitive isometry of a Riemannian manifold $M$.  Then $f$ is ergodic with respect to Riemannian volume.
\end{theorem}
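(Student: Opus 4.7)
The plan is to adapt the circle-rotation argument, replacing Proposition~\ref{p.density} with the Lebesgue Density Theorem and replacing intervals by Riemannian balls. So suppose, for contradiction, that $A\subset M$ is an $f$-invariant measurable set with $0<m(A)<m(M)$, normalize $m$ to be a probability, and let $A^c$ denote its complement.

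First I would establish that the orbit of every point is dense. This is the standard fact that a transitive isometry on a compact metric space is minimal: given $q\in M$ and $\eps>0$, if $p$ has dense orbit, choose $n,k$ with $d(f^n(p),x)<\eps/2$ and $d(f^k(p),q)<\eps/2$; then $d(f^{k-n}(x),q)\leq d(f^{k-n}(x),f^{k-n}(f^n(p)))+d(f^k(p),q)<\eps$ using that $f^{k-n}$ is an isometry. Next, apply the Lebesgue Density Theorem to $A$ and to $A^c$: choose a density point $x$ of $A$ and a density point $y$ of $A^c$. Fix $\eps>0$ and pick $r>0$ so small that
\[
m(A\colon B_r(x))>1-\eps \quand m(A\colon B_r(y))<\eps.
\]

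Now I would exploit that $f$ is a measure-preserving isometry. Because iterates of $f$ send $r$-balls to $r$-balls, $f^n(B_r(x))=B_r(f^n(x))$ for every $n$, and because $A$ is invariant and $f^n$ preserves $m$,
\[
m(A\colon B_r(f^n(x)))=m(f^n(A)\colon f^n(B_r(x)))=m(A\colon B_r(x))>1-\eps.
\]
By the density of the orbit of $x$, choose $n$ with $d(f^n(x),y)<\delta$, where $\delta>0$ will be taken much smaller than $r$. The balls $B_r(f^n(x))$ and $B_r(y)$ then have symmetric difference contained in the annular region $B_{r+\delta}(y)\setminus B_{r-\delta}(y)$, and basic comparison of Riemannian volume with Lebesgue measure in a coordinate chart shows $m(B_{r+\delta}(y)\setminus B_{r-\delta}(y))\to 0$ as $\delta\to 0$ with $r$ fixed. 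Consequently $|m(A\colon B_r(f^n(x)))-m(A\colon B_r(y))|$ can be made smaller than $\eps$, yielding $m(A\colon B_r(y))>1-2\eps$, which contradicts $m(A\colon B_r(y))<\eps$ once $\eps<1/3$.

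The main obstacle is purely geometric: making rigorous the claim that close balls have close $A$-densities. In the circle case this was trivial because $R_\alpha^n(I)$ is an interval congruent to $J$ and the symmetric difference is two small intervals; in the Riemannian setting one needs that the volume of a thin spherical shell around $B_r(y)$ is small relative to $m(B_r(y))$. This follows from the smoothness of the metric (volume has a continuous density in normal coordinates), but it is the one step where the compactness of $M$ and smoothness of the Riemannian structure enter, beyond what is used for the isometry property alone.
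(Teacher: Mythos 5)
Your proof is correct and takes essentially the same route as the paper: an iterate of the isometry carries a ball in which $A$ has density near $1$ onto a ball of the same radius nearly coinciding with a ball in which $A^c$ has density near $1$, contradicting invariance (this is exactly Fundamental Principles \#1 and \#2 at work). Replacing Proposition~\ref{p.density} and equal-length intervals by the Lebesgue Density Theorem and Riemannian balls is just the natural adaptation to a manifold, and your spherical-shell estimate (using that metric spheres have volume zero, with compactness guaranteeing invertibility and minimality) correctly supplies the only genuinely new geometric detail.
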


One can isolate from this proof a useful principle:
\medskip

\noindent{\em  {\bf \em Fundamental Principle $\#$2:} Isometries preserve Lebesgue density at all scales, for arbitrarily many iterates.}

\medskip

This principle implies, for example, that a smooth action by a 
compact Lie group on $M$ is ergodic along
typical (nonsingular) orbits.  This principle
is also useful in studying area-preserving flows on surfaces and,
in a refined form, unipotent flows on homogeneous spaces.  
In the case of surface flows, ergodicity questions
can be reduced to a study of interval exchange transformations.  See the 
entries Basic Examples and Billiards in this volume for a detailed discussion
of interval exchange transformations and flows on surfaces.  The entry
on Ergodic Theory of Group Actions contains detailed information
on unipotent flows.

\bigskip

\noindent{\bf Doubling map on the circle.} Let $T_2\colon S^1\to S^1$ be the doubling map defined by $T_2(x)= 2x$.  Then $T_2$ is a degree-2 covering map and endomorphism of $S^1$ with constant jacobian
$\hbox{jac}_xT_2 \equiv 2$. Since $\frac{d(T_2)_\ast \lambda}{d\lambda} = \frac12 + \frac12 = 1$, $T_2$ preserves
Lebesgue-Haar measure.  The doubling map is the simplest example of a hyperbolic dynamical system,
a topic treated in depth in the next section.

As with the previous example, the focus here is
on the property of ergodicity.  It is again 
possible to prove much stronger results about $T_2$,
such as Bernoullicity, by other methods.  Instead, here is
a soft proof of ergodicity that will generalize
readily to other contexts. 
\begin{proposition}\label{p.T_2ergodic} The doubling map $T_2\colon S^1\to S^1$ is ergodic with respect to Lebesgue measure.
\end{proposition}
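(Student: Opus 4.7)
The strategy is to apply Proposition~\ref{p.density} to the canonical dyadic partitions, exploiting the fact that $T_2$ is affine on each such interval, to show that any invariant set of positive measure is already approximated with density $1$ by sets on which it is forced (by invariance) to have density exactly $\lambda(A)$.

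First, I would introduce the family of dyadic partitions $\mathcal{P}_n$ of $S^1$ into the $2^n$ half-open intervals $[k/2^n,(k+1)/2^n)$, $k=0,\dots,2^n-1$. These satisfy the hypotheses of Proposition~\ref{p.density}: each element of $\mathcal{P}_n$ is a union of two elements of $\mathcal{P}_{n+1}$, and the diameter shrinks to zero. Crucially, these partitions are dynamically defined: for each $I\in\mathcal{P}_n$, the restriction $T_2^n|_I$ is an affine bijection onto $S^1$ (modulo endpoints) with constant jacobian $2^n$. Thus for any measurable $B\subseteq I$, one has $\lambda(T_2^n(B)) = 2^n \lambda(B)$, equivalently $\lambda(B\colon I) = \lambda(T_2^n(B)\colon S^1) = \lambda(T_2^n(B))$.

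Now suppose for contradiction that $A\subseteq S^1$ is $T_2$-invariant with $0<\lambda(A)<1$. Applying Proposition~\ref{p.density} to $A$ and the partitions $\mathcal{P}_n$, I obtain a sequence of dyadic intervals $I_n\in\mathcal{P}_n$ with
$$\lim_{n\to\infty} \lambda(A\colon I_n) = 1.$$
On the other hand, invariance of $A$ means $T_2^{-n}(A)=A$ (mod 0), so $A\cap I_n = T_2^{-n}(A)\cap I_n$, which gives $T_2^n(A\cap I_n) = A$ (mod 0). Combining with the affine identity above,
$$\lambda(A\colon I_n) = \lambda(T_2^n(A\cap I_n)) = \lambda(A)$$
for every $n$. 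Hence $\lambda(A) = 1$, a contradiction.

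The only delicate step is verifying that $T_2^n(A\cap I_n) = A$ modulo null sets: one must use $T_2$-invariance in the backward sense (which is automatic for invariant sets under a measure-preserving, finite-to-one map) and be careful that $T_2^n|_{I_n}$ is a bijection onto $S^1$ up to a set of measure zero. Once this is pinned down, the rest is bookkeeping. The proof illustrates Fundamental Principle~\#1 in a dynamically enhanced form: the expansion of $T_2$ promotes the high density at small scale provided by Proposition~\ref{p.density} into a global statement about $\lambda(A)$, ruling out the mixing-at-all-scales behavior that a nontrivial invariant set would require.
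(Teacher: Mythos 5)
Your proof is correct and takes essentially the same route as the paper's: the same partitions (your dyadic intervals are exactly the paper's connected components of $S^1\setminus T_2^{-n}(\{p\})$ for the fixed point $p=0$), the same appeal to Proposition~\ref{p.density}, and the same constant-jacobian-$2^n$ computation. The only cosmetic difference is that you upgrade the paper's inclusion $T_2^{n}(A\cap J)\subset A$ to the mod~0 equality $T_2^{n}(A\cap I_n)=A$, yielding the exact identity $\lambda(A\colon I_n)=\lambda(A)$ instead of the paper's inequality $\lambda(A)\geq \lambda(A\colon J)>1-\eps$; both versions close the argument.
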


\begin{proof}  Let $A$ be a $T_2$-invariant set in $S^1$ with $\lambda(A)>0$. Let $p\in S^1$ be the
fixed point of $T_2$, so that $T_2(p)=p$. 
For each $n\in\NN$, the preimages of $p$ under $T_2^{-n}$ define
a (mod 0) partition ${\mathcal P}_n$ into $2^n$ open intervals of length $2^{-n}$; 
the elements of ${\mathcal P}_n$ are the connected components of $S^1\setminus T_2^{-n}(\{p\})$. 
Note that the sequence of partitions
${\mathcal P}_1, {\mathcal P}_2,\ldots$ is nested, in the sense of Proposition~\ref{p.density}.
Restricted to any interval $J\in {\mathcal P}_n$, 
the map $T_2^n$ is a diffeomorphism onto $S^1\setminus\{p\}$ with constant jacobian 
$\hbox{jac}_x(T_2^n) = (T_2^n)'(x) = 2^n$. 

Since $A$ is invariant, it follows that $T_2^{-n}(A) = A$. 
Fix $\eps>0$. Proposition~\ref{p.density} implies
that there exists an $n\in\NN$ and an interval $J\in {\mathcal P}_n$ such that $\lambda(A\colon J)>1-\eps$.
Note that $T_2^{n}(A\cap J)\subset A$.  But then
\begin{eqnarray*}
\lambda(A) &\geq& \lambda(T_2^{n}(A\cap J))\\
&=& \int_{A\cap J} \hbox{jac}_x(T_2^n)\, d\lambda(x) \\
&= & 2^n \lambda(A\cap J)\\
& = & 2^n \lambda(A\colon J)\lambda(J)\\
& > & 2^n (1-\eps) \lambda(J) = 1-\eps.
\end{eqnarray*}
Since $\eps$ was arbitrary, one obtains that $\lambda(A)=1$.
\end{proof}

In this proof, the facts that the intervals in ${\mathcal P}_n$ have
constant length $2^{-n}$ and that the jacobian of $T_2^n$ restricted
to such an interval is constant and equal to $2^n$ are not essential. 
The key fact really used in this proof is the assertion that the ratio:
$$\frac{\lambda(T_2^{n}(A\cap J)\colon  T_2^{n}(J))}{\lambda(A\colon J)}$$
is bounded, {\em independently of $n$}. In this case, the ratio is $1$ for all $n$
because $T_2$ has constant jacobian.  

It is tempting to try to extend this proof to other expanding maps on the circle, for
example, a $C^1$, $\lambda$-preserving map $f\colon S^1\to S^1$ with $d_{C^1}(f,T_2)$ small.
Many of the aspects of this proof carry through {\em mutatis mutandis} for 
such an $f$, save for one. A $C^1$-small perturbation of $T_2$ will
in general no longer have constant jacobian, and the {\em variation} of the jacobian
of $f^n$ on a small interval can be (and often is) unbounded.  The reason
for this unboundedness is a lack of control of the modulus of continuity of $f'$.
Hence this argument can fail for  $C^1$ perturbations of $T_2$.  
On the other hand, the argument still works for {\em $C^2$} 
perturbations of $T_2$, even when the jacobian is not constant.

The principle behind this fact can be loosely summarized:
\medskip

\noindent{\em  {\bf \em Fundamental Principle $\#$3:} On controlled scales, iterates
of $C^2$ expanding maps distort Lebesgue density in a controlled way.}

\medskip

This principle requires further explanation and justification, which will come 
in the 
following section.  The $C^2$ hypothesis in this principle accounts for the fact that
almost all results in smooth ergodic theory assume a $C^2$ hypothesis (or something
slightly weaker).

\section{Hyperbolic systems}

One of the most developed areas of smooth ergodic theory is in the study of hyperbolic
maps and attractors.  This section defines 
hyperbolic maps and attractors, provides examples,
and investigates their ergodic properties.  
See \cite{robinson, katokhasselblatt} and the entry Hyperbolic
Dynamical Systems in this series for a thorough discussion 
of the topological and smooth properties of hyperbolic systems.

A {\em hyperbolic structure} on a compact $f$-invariant set $\Lambda\subset M$ is
given by a $Df$-invariant splitting  $T_\Lambda M = E^u\oplus E^s$ of the tangent bundle over $\Lambda$
and constants $C, \mu>1$ such that, for every $x\in \Lambda$ and $n\in\NN$:
$$v\in E^u(x) \quad \implies \quad \|D_xf^n(v)\| \geq C^{-1}\mu^n \|v\|,\quad \hbox{and}$$
$$v\in E^s(x) \quad \implies \quad \|D_xf^n(v)\| \leq C\mu^{-n} \|v\|.$$
A hyperbolic attractor for a
map $f\colon M\to M$ is given by an open set $U\subset M$ such that: $f(U)\subset \overline{U}$, and
such that the set $\Lambda = \bigcap_{n\geq 0} f^n(U)$ carries a hyperbolic
structure.  The set $\Lambda$ is called the {\em attractor}, and $U$ is an {\em attracting region}.
A map $f\colon M\to M$ is {\em hyperbolic} if  $M$
decomposes (mod 0) into a finite union of attracting regions for hyperbolic attractors.  Typically one assumes as
well that the restriction of $f$ to each attractor $\Lambda_i$ is topologically transitive.

Every point $p$ in a hyperbolic set $\Lambda$ has  smooth {\em stable manifold} $\cW^s(p)$ and {\em unstable manifold}
$\cW^u(p)$, tangent, respectively, to the subspaces $E^s(p)$ and $E^u(p)$. The set $\cW^s(p)$
is precisely the set of $q\in M$ such that $d(f^n(p), f^n(q))$ tends to $0$ as $n\to\infty$, and
it follows that $f(\cW^s(p)) = \cW^s(f(p))$.
When $f$ is a diffeomorphism, the unstable manifold $\cW^u(p)$ is uniquely defined and is the stable manifold
of $f^{-1}$. When $f$ is not invertible, local unstable manifolds exist, but generally are not unique. 
If $\Lambda$ is a transitive hyperbolic attractor, then every unstable manifold of every point 
$p\in\Lambda$ is dense in $\Lambda$.

\bigskip

\noindent{\bf Examples of hyperbolic maps and attractors}

\medskip

\noindent{\em Expanding maps.}  The previous section mentioned briefly the $C^r$ perturbations of the doubling
map $T_2$.  Such perturbations (as well as $T_2$ itself) are examples of {\em expanding maps}.  A map
$f\colon M\to M$ is {\em expanding} if there exist constants $\mu>1$ and $C>0$  such that,
for every $x\in M$, and every nonzero vector $v\in T_xM$:
$$\|D_xf^n(v)\| \geq C\mu^n \|v\|,$$
with respect to some (any) Riemannian metric on $M$.
An expanding map is clearly hyperbolic, with $U=M$, $E^s$ the trivial bundle,  and $E^u = TM$.
Any disk in $M$ is a local unstable manifold for $f$.

\bigskip

\noindent{\em Anosov diffeomorphisms.}  A diffeomorphism $f\colon M\to M$ is called {\em Anosov} 
if the tangent bundle splits as a direct sum $TM = E^u\oplus E^s$ of two $Df$-invariant
subbundles, such that $E^u$ is uniformly expanded and $E^s$ is uniformly contracted
by $Df$.  Similarly, a flow $\varphi_t\colon M\to M$ is called Anosov if the tangent bundle splits 
as a direct sum $TM = E^u\oplus E^0\oplus E^s$ of three $D\varphi_t$-invariant
subbundles, such that $E^0$ is generated by $\dot \varphi$,
$E^u$ is uniformly expanded and $E^s$ is uniformly contracted by $D\varphi_t$.
Like expanding maps, an Anosov diffeomorphism is an Anosov attractor with $\Lambda=U=M$.

A simple example of a conservative Anosov diffeomorphism is a hyperbolic linear automorphism 
of the torus.  Any matrix $A\in SL(n, \ZZ)$ induces an automorphism of $\RR^n$ preserving
the integer lattice $\ZZ^n$, and so descends to an automorphism $f_A\colon  T^n\to T^n$
of the $n$-torus $T^n= \RR^n/\ZZ^n$.  Since the determinant of $A$ is $1$,
the diffeomorphism $f_A$ preserves Lebesgue-Haar measure on $T^n$.
In the case where none of the eigenvalues of $A$ have modulus $1$, the resulting 
diffeomorphism $f_A$ is Anosov. The stable bundle $E^s$ at $x\in T^n$ is the parallel
translate to $x$ of the sum of the contracted generalized eigenspaces of $A$, and
the unstable bundle $E^u$ at $x$ is the translated sum of expanded eigenspaces.

In general, the invariant subbundles $E^u$ and $E^s$ of an Anosov diffeomorphism are integrable and tangent to 
a transverse pair of foliations $\cW^u$ and $\cW^s$, respectively (see, e.g.  \cite{hps} for a proof of this).
The leaves of $\cW^s$ are uniformly contracted by $f$, and the leaves of $\cW^u$
are uniformly contracted by $f^{-1}$. The leaves of these foliations are as 
smooth as $f$, but the tangent bundles to the leaves do not vary smoothly in the manifold.
The regularity properties of these foliations play an important role in
the ergodic properties of Anosov diffeomorphisms.

The first Anosov flows to be studied extensively were the geodesic flows for manifolds of
negative sectional curvatures.  As these flows are Hamiltonian, they are conservative.  
Eberhard Hopf showed in the 1930's that such geodesic flows for surfaces are ergodic with respect to
Liouville measure \cite{hopf}; it was not until the 1960's that ergodicity of all such flows
was proved by Anosov \cite{anosov}.  The next section describes, in
the context of Anosov diffeomorphisms, Hopf's method and important refinements due to Anosov and Sinai.

\bigskip

\noindent{\em DA attractors.} A simple way to produce a non-Anosov hyperbolic attractor on the torus
is to start with an Anosov diffeomorphism, such as a linear hyperbolic automorphism, 
and deform it in a neighborhood
of a fixed point, turning a saddle fixed point into a source, while preserving the
stable foliation.  If this procedure is carried
out carefully enough, the resulting diffeomorphism is a dissipative hyperbolic diffeomorphism,
called a {\em derived from Anosov (DA)} attractor. Other examples of hyperbolic attractors
are the Plykin attractor and the solenoid.  See \cite{robinson}.

\bigskip

\noindent{\bf Distortion estimates}

\medskip

Before describing the ergodic properties of hyperbolic systems, it is 
useful to pause for a brief discussion of distortion estimates.
Distortion estimates are behind almost every result in smooth ergodic theory.  In the hyperbolic setting,
distortion estimates are applied to the action of $f$ on unstable manifolds to show
that the volume distortion of $f$ along unstable manifolds can be controlled for arbitrarily
many iterates.

The example mentioned at the end of the previous section illustrates the ideas in a distortion estimate. Suppose that $f\colon S^1\to S^1$ is a $C^2$ expanding map, such as a $C^2$ small
perturbation of $T_2$.  Then there
exist constants $\mu>1$ and $C>0$ such that
$(f^n)'(x) > C\mu^n$ for all $x$ and $n$.

Let $d$ be the degree of $f$. If $I$ is a sufficiently small open interval in $S^1$, 
then for each $n$, $f^{-n}(I)$ is a union of $d$ disjoint intervals.  
Furthermore, each of these intervals has diameter at most $C^{-1}\mu^{-n}$
times the diameter of $I$.  It is now possible to justify 
the assertion in Fundamental Principle $\#$3
in this context.  

\begin{lemma}\label{l.distortion} There exists a constant $K\geq 1$ such that, for all $n\in\NN$, and for
all $x,y\in f^{-n}(I)$, one has:
$$K^{-1}\leq \frac{(f^n)'(x)}{(f^n)'(y)} \leq K.$$
\end{lemma}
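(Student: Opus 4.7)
The plan is to pass to logarithms and exploit the chain rule, turning the multiplicative distortion into a sum that can be controlled by a geometric series thanks to the backward contraction of an expanding map on the preimages of $I$. A standing interpretation: the inequality is meaningful (and proved) for $x,y$ lying in a common connected component $J$ of $f^{-n}(I)$; since $(f^n)'$ is continuous on each component and the components are disjoint, this is the only content of the statement.

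First I would write
\[
\log\frac{(f^n)'(x)}{(f^n)'(y)} \;=\; \sum_{k=0}^{n-1}\bigl(\log f'(f^k(x)) - \log f'(f^k(y))\bigr),
\]
so that bounding the left-hand side reduces to bounding this telescoping-style sum. Since $f$ is $C^2$ and $f'\geq C\mu^n|_{n=1}$ is bounded below by a positive constant, $\log f'$ is a $C^1$ function on the compact manifold $S^1$; in particular it is Lipschitz, with some constant $L>0$ depending only on $f$. So each summand is at most $L\cdot d(f^k(x),f^k(y))$.

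Next I would use the hypothesis already established in the paragraph preceding the lemma: for $I$ sufficiently small, every component of $f^{-m}(I)$ has diameter at most $C^{-1}\mu^{-m}|I|$. Applied with $m=n-k$, and using that $f^k(x)$ and $f^k(y)$ lie in the same component of $f^{-(n-k)}(I)$ (this is where the ``same component of $f^{-n}(I)$'' hypothesis is essential), one gets
\[
d(f^k(x),f^k(y)) \;\leq\; C^{-1}\mu^{-(n-k)}|I|.
\]
Summing,
\[
\left|\log\frac{(f^n)'(x)}{(f^n)'(y)}\right| \;\leq\; \frac{L|I|}{C}\sum_{k=0}^{n-1}\mu^{-(n-k)} \;\leq\; \frac{L|I|}{C(\mu-1)},
\]
which is a bound independent of $n$. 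Exponentiating yields the claim with $K=\exp\!\bigl(L|I|/(C(\mu-1))\bigr)$.

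The only real obstacle is recognizing why the $C^2$ hypothesis is doing the work: one needs $\log f'$ to have a definite modulus of continuity (Lipschitz here, though Hölder would also suffice) in order for the geometric decay of $d(f^k(x),f^k(y))$ in $k$ — which comes for free from uniform expansion — to produce a summable series. A $C^1$ hypothesis alone leaves no control on how fast $\log f'$ varies on the small intervals $f^{-(n-k)}(I)$, and the sum can diverge, which is precisely the failure mode alluded to in the remarks preceding the lemma.
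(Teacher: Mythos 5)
Your argument is essentially the paper's own proof: write $\log\frac{(f^n)'(x)}{(f^n)'(y)}$ as a Birkhoff-type sum of the Lipschitz cocycle $\log f'$ (Lipschitz because $f$ is $C^2$ and $f'$ is bounded below), bound $d(f^k(x),f^k(y))$ by $C^{-1}\mu^{-(n-k)}$ times a uniform constant using the expansion, sum the geometric series, and exponentiate. Your explicit restriction to $x,y$ in the same connected component of $f^{-n}(I)$ is a sound reading (indeed it is needed for the estimate, and the paper's proof tacitly assumes it when it propagates $d(f^n(x),f^n(y))$ backward along a single branch), so this is a clarification rather than a deviation.
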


\begin{proof} Since $f$ is $C^2$ and $f'$ is bounded away from $0$, 
the function $\alpha(x) = \log(f'(x))$ is $C^1$. In particular, $\alpha$ is
Lipschitz continuous: there exists a constant
$L>0$ such that, for all $x,y\in S^1$,
$|\alpha(x) - \alpha(y)| < L d(x,y)$.
For $n\geq 0$, let $\alpha_n(x) = \log((f^n)'(x))$.
The Chain Rule implies that $\alpha_n(x) = \sum_{i=0}^{n-1} \alpha(f^i(x))$.

The expanding hypothesis on $f$ implies that for all $x,y\in f^{n}(I)$ and for $i=0,\ldots, n$, one
has $d(f^i(x), f^i(y)) \leq C^{-1}\mu^{n-i} d(f^{n}(x),f^{n}(y)) \leq C^{-1}\mu^{n-i}$.
Hence
\begin{eqnarray*}
|\alpha_n(x) - \alpha_n(y)|&\leq & \sum_{i=0}^{n-1} |\alpha(f^i(x)) - \alpha(f^i(y))|\\
&\leq & L \sum_{i=0}^{n-1} d(f^i(x), f^i(y)) \\
&\leq & L \sum_{i=0}^{n-1} C^{-1}\mu^{n-i}\\
& < & LC^{-1}\mu^{-1}(1-\mu^{-1})^{-1}.  
\end{eqnarray*}
Setting $K=\exp(LC^{-1}\mu^{-1}(1-\mu^{-1})^{-1})$, one now sees
that $(f^n)'(x)/(f^n)'(y)$ lies in the interval
$[K^{-1}. K]$, proving the claim.
\end{proof}

In this distortion estimate, the function $\alpha\colon M\to \RR$ is called a {\em cocycle}.  The same
argument applies to any Lipschitz continuous (or even H\"older continuous) cocycle.

\bigskip

\noindent{\bf Ergodicity of expanding maps}

\medskip

The ergodic properties of $C^2$ expanding maps are completely understood.  In particular,
every conservative expanding map is ergodic, and every expanding map is conservative.
The proofs of these facts use Fundamental Principles $\#$1 and 3 in a fairly direct way.

Every $C^2$ conservative expanding map is ergodic with respect to volume.  The proof
is a straightforward adaptation of the proof of Proposition~\ref{p.T_2ergodic} (see, e.g.
\cite{mane}).  Here is a description of the proof for $M=S^1$. 
As  remarked earlier, the proof of Proposition~\ref{p.T_2ergodic} adapts easily
to a general expanding map $f\colon S^1\to S^1$ once one shows 
that for every $f$-invariant set $A$,
and every connected component $J$ of $f^{-n}(S^1\setminus\{p\})$, the quantity 
$$\frac{\lambda(f^{n}(A\cap J)\colon  f^{n}(J))}{\lambda(A\colon J)}$$
is bounded independently of $n$.  This is a fairly
direct consequence of the distortion estimate in Lemma~\ref{l.distortion} and is left as an exercise.

The same distortion estimates show that every $C^2$ expanding map is conservative, preserving a probability measure $\nu$ in the measure class of volume.  
Here is a sketch of the proof for the case $M=S^1$. 
To prove this, consider the push-forward
$\lambda_n = f^n_\ast \lambda$.  Then  $\lambda_n$ is 
equivalent to Lebesgue, and its Radon-Nikodym
derivative $\frac{d\lambda_n}{d\lambda}$ is  the density function
$$\rho_n (x) = \sum_{y\in f^{-n}(x)} \frac{1}{\hbox{jac}_y f^n}.$$ 
Since $f^n_\ast\lambda$ is a probability measure, it follows that $\int_{S^1}\rho_n \,d\lambda=1$.
A simple argument using the distortion estimate above (and summing up over
all $d^n$ branches of $f^{-n}$ at $x$) shows  that 
there exists a constant $c\geq 1$ such that for all $x,y\in S^1$, 
$$c^{-1}\leq \frac{\rho_n(x)}{\rho_n(y)} \leq c.$$

Since the integral of $\rho_n$ is $1$, the functions $\rho_n$ 
are uniformly bounded away from $0$ and $\infty$.  It is easy to see 
that the measure $\nu_n = \frac{1}{n} \sum_{i=1}^n f^i_\ast \lambda$ has density $\frac{1}{n} \sum_{i=1}^n \rho_i$.
Let $\nu$ be any subsequential weak* limit of  $\nu_n$; then $\nu$ is absolutely continuous,
with density $\rho$ bounded away from $0$ and $\infty$. With a little more care, one can show
that $\rho$ is actually Lipschitz continuous.

As a passing comment, the ergodicity of $\nu$ and positivity of $\rho$ imply that $\nu$ is the unique  $f$-invariant measure absolutely continuous with respect to $\lambda$.  With more work,
one can show that $\nu$ is exact.  See \cite{mane} for details.

\bigskip

\noindent{\bf Ergodicity of conservative Anosov diffeomorphisms}

\medskip

Like conservative $C^2$ expanding maps, conservative $C^2$ Anosov diffeomorphisms are ergodic.
This subsection outlines a proof of this fact.
Unlike expanding maps, however, Anosov
diffeomorphisms need not be conservative. The subsection following this one
describe a type of invariant measure that is ``natural'' with respect to volume, called a Sinai-Ruelle-Bowen
(or SRB) measure.  The central result for hyperbolic systems states that every hyperbolic
attractor carries an SRB measure.

\bigskip

\noindent{\em The Hopf Argument}

\medskip

In the 1930's Hopf \cite{hopf} proved that the geodesic flow
for a compact, negatively-curved surface is ergodic.  His method
was to study the Birkhoff averages of continuous functions along leaves
of the stable and unstable foliations of the flow.  This type
of argument has been used since then in increasingly general contexts,
and has come to be known as the Hopf Argument. 

The core of the Hopf Argument is very simple.  To any $f\colon M\to M$ one
can associate the {\em stable equivalence relation $\sim_s$}, where $x\sim_s y$ iff
$\lim_{n\to \infty} d(f^n(x), f^n(y)) = 0$. Denote by $W^s(x)$ the
stable equivalence class containing $x$.  When $f$ is invertible, one defines
the {\em unstable equivalence relation} to be the stable
equivalence relation for $f^{-1}$, and one denotes by $W^u(x)$  the
unstable equivalence class containing $x$. 

The first step in the Hopf Argument is to show that Birkhoff 
averages for continuous functions are
constant along stable and unstable equivalence classes.  Let $\phi\colon M\to \RR$
be an integrable function, and let
\begin{eqnarray}\label{e.birkhoff}
\overline\phi = \limsup_{n\to \infty} \frac{1}{n}\sum_{i=1}^n \phi\circ f^i.
\end{eqnarray}
Observe that if $\phi$ is continuous, then for every $x\in M$ and $x'\in W^s(x)$,  
$\lim_{n\to\infty}|\phi(f^i(x)) -\phi(f^i(x'))| = 0$.  It follows immediately
that $\overline\phi_f(x)=\overline\phi_f(x')$.  In particular, if the limit in
(\ref{e.birkhoff}) exists at $x$, then it exists and is constant on $W^s(x)$.

\bigskip

\noindent{\em  {\bf \em Fundamental Principle $\#$4:} Birkhoff averages of continuous
functions are constant along stable equivalence classes.}

\medskip

The next step of Hopf's argument confines itself to the situation where
$f$ is conservative and Anosov.  In this case, $f$ is invertible, the stable equivalence classes
are precisely the leaves of the stable foliation $\cW^s$, and the unstable
equivalence classes are the leaves of the unstable foliation $\cW^u$.  Since
$f$ is conservative, the Ergodic Theorem
implies that for every $L^2$ function $\phi$, the function $\overline\phi_f$
is equal (mod 0) to the projection of $\phi$ onto the $f$-invariant functions in $L^2$.
Since this projection is continuous, and the continuous functions are dense in $L^2$,
to prove that $f$ is ergodic, it suffices to show that the projection of 
any continuous function is trivial.  That is, it suffices to show that for 
every continuous $\phi$, the function $\overline\phi_f$ is constant (a.e.).  

To this end, let $\phi\colon M\to \RR$ be continuous.  Since the $f$-invariant functions coincide
with the $f^{-1}$-invariant functions, one obtains that $\overline\phi_f = \overline\phi_{f^{-1}}$ a.e.
The previous argument shows $\overline\phi_f$ is constant along $\cW^s$-leaves
and $\overline\phi_{f^{-1}}$ is constant along $\cW^u$-leaves.  The desired 
conclusion is that $\overline\phi_f$ is a.e. constant.  It suffices to 
show this in a local chart, since
the manifold $M$ is connected.  In a local chart, after a smooth change of coordinates, one obtains a pair of transverse foliations $\cF_1$, $\cF_2$ of the cube $[-1,1]^n$ by disks,
and a measurable function $\psi\colon [-1,1]^n \to \RR$ that is constant along the leaves of $\cF_1$
and constant along the leaves of $\cF_2$.

When the foliations $\cF_1$ and $\cF_2$ are smooth (at least $C^1$), one can perform a further
smooth change of coordinates so that $\cF_1$ and $\cF_2$ are transverse coordinate subspace foliations.
In this case, Fubini's theorem implies that any measurable function that is constant along two 
transverse coordinate foliations is a.e. constant.  This completes the proof in the case
that the foliations $\cW^s$ and $\cW^u$ are smooth.  In Hopf's original argument, the stable and
unstable foliations were assumed to be $C^1$ foliations 
(a hypotheses satisfied in the examples he considered, due to low-dimensionality.  See also 
\cite{sinaigeodesic}, where a pinching condition on the curvature, rather than low dimensionality,
implies this $C^1$ condition on the foliations.)

\bigskip

\noindent{\em Absolute continuity}

\medskip

For a general Anosov diffeomorphism or flow, the stable and unstable foliations are not $C^1$, and
so the final step in Hopf's orginal argument does not apply.  The fundamental advance of
Anosov and Anosov-Sinai was to prove that the stable and unstable foliations of an Anosov
diffeomorphism (conservative or not) satisfy a weaker condition than smoothness, called {\em absolute continuity}.
For conservative systems, absolute continuity is enough to finish Hopf's argument, proving that every $C^2$ conservative Anosov diffeomorphism is ergodic \cite{anosov, anosovsinai}.

For a definition and careful discussion of absolute continuity of a foliation $\cF$, see \cite{brinstuck}. 
Two consequences of the absolute continuity of $\cF$ are:
\begin{enumerate}
\item[(AC1)] If $A\subset M$ is any measurable set, then
$$\lambda(A) = 0 \quad\iff\quad \lambda_{\cF(x)}(A) = 0,\quad\hbox{for}\quad \lambda-\hbox{a.e. }\, x\in M,$$
where $\lambda_{\cF(x)}$ denotes the induced Riemannian volume on the leaf of $\cF$ through $x$.
\item[(AC2)] If $\tau$ is any small, smooth disk transverse to a local leaf of $\cF$, and $T\subset \tau$ is
a $0$-set in $\tau$ (with respect to the induced Riemannian volume on $\tau$), then the union of the $\cF$ leaves through points in $T$ has Lebesgue measure $0$ in $M$.
\end{enumerate}

The proof that $\cW^s$ and $\cW^u$ are absolutely continuous has a similar flavor to the proof that an expanding map
has a unique absolutely continuous invariant measure (although the cocycles involved are
H\"older continuous, rather than Lipschitz), and the facts are intimately related.

With absolute continuity of the stable and unstable foliations in hand, one 
can now prove:

\begin{theorem}(Anosov) Let $f$ be a $C^2$, conservative Anosov diffeomorphism.  Then $f$ is ergodic.
\end{theorem}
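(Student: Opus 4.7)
The plan is to push the Hopf Argument through using the absolute continuity of the stable and unstable foliations $\cW^s$ and $\cW^u$ that was just discussed. Since the set of $f$-invariant functions is closed in $L^2(\lambda)$ and continuous functions are dense, it suffices to show that for every continuous $\phi\colon M \to \RR$, the Birkhoff average $\overline\phi_f$ is $\lambda$-almost everywhere constant.

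First I would apply the Birkhoff Ergodic Theorem to both $f$ and $f^{-1}$, which are conservative with the same invariant volume $\lambda$. This yields a full-measure set $E \subset M$ on which $\overline\phi_f$ and $\overline\phi_{f^{-1}}$ both exist and coincide. Fundamental Principle $\#$4 then says that $\overline\phi_f$ is constant along each intersection $\cW^s(x) \cap E$ and $\overline\phi_{f^{-1}}$ is constant along each $\cW^u(x) \cap E$. Since $M$ is connected, a.e. constancy of $\overline\phi_f$ need only be established in a small neighborhood $U$ of an arbitrary $x_0 \in M$, chosen so that the local leaves of $\cW^s$ and $\cW^u$ have a local product structure on $U$.

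Next, I would exploit absolute continuity to complete the local argument. By (AC1) applied to $\cW^u$, for $\lambda$-a.e.\ $y \in U$ the local unstable leaf $\cW^u_{\text{loc}}(y)$ meets $E$ in a set of full leaf-volume, and on this intersection $\overline\phi_f = \overline\phi_{f^{-1}}$ is constant. Hence on a conull $U_0 \subseteq U$, the value $\overline\phi_f(y)$ depends only on the intersection of $\cW^u_{\text{loc}}(y)$ with a fixed local stable transversal through $x_0$. Symmetrically, on a conull $U_1 \subseteq U$, the value depends only on the intersection with a fixed local unstable transversal. For two Lebesgue density points $y,z \in U_0 \cap U_1$, the local product structure supplies a point $w \in \cW^s_{\text{loc}}(y) \cap \cW^u_{\text{loc}}(z)$, and (AC2) ensures that for $\lambda\times\lambda$-a.e.\ such pair $(y,z)$ the bracket point $w$ may be taken in $E$. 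The Hopf zigzag then gives $\overline\phi_f(y) = \overline\phi_f(w) = \overline\phi_f(z)$, which is the desired a.e.\ constancy on $U$.

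The main obstacle is precisely the non-smoothness of $\cW^s$ and $\cW^u$. If these foliations were $C^1$, a single change of coordinates would straighten both to transverse coordinate subspaces and classical Fubini would finish the argument, which is how Hopf originally concluded in low dimensions. In the general Anosov setting the invariant distributions are only H\"older continuous, and absolute continuity is the weaker substitute that survives: (AC1) furnishes the conditional-measure decomposition along leaves that a smooth Fubini would provide, while (AC2) ensures that null sets on transversals remain null under holonomy, which is what makes the zigzag step legitimate. Verifying (AC1)--(AC2) itself rests on a H\"older distortion estimate for the unstable Jacobian cocycle along unstable leaves, in close analogy with Lemma~\ref{l.distortion}, and this is where the $C^2$ hypothesis enters in an essential way.
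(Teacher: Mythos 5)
Your proposal is correct and takes essentially the same route as the paper's sketch: reduce via the Hopf Argument to a.e.\ constancy of Birkhoff averages of continuous functions, then run the local zigzag through a bracket point using (AC1) and (AC2) for the absolutely continuous foliations $\cW^s$ and $\cW^u$, globalizing by connectedness. The only difference is organizational --- the paper fixes a single good point $p$ whose stable leaf meets the good set in full leaf measure and saturates it with unstable leaves, whereas you argue that for a.e.\ pair $(y,z)$ the bracket point lies in the good set --- which is the same mechanism at the same level of detail.
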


\begin{proof}[Sketch of proof.]  By the Hopf Argument, it suffices to show that if $\psi^s$
and $\psi^u$ are $L^2$ functions with the following properties:
\begin{enumerate}
\item $\psi^s$ is constant along leaves of $\cW^s$, 
\item $\psi^u$ is constant along leaves of $\cW^u$, and
\item $\psi^s=\psi^u$ a.e.,
\end{enumerate}
then $\psi^s$ (and so $\psi^u$ as well) is constant a.e.

This is proved using the absolute continuity of $\cW^u$ and $\cW^s$. 
Since $M$ is connected, one may argue this locally.  
Let $G$ be the full measure set of $p\in M$ such that 
$\psi^s=\psi^u$. Absolute continuity of $\cW^s$ (more precisely, consequence (AC1) of absolute continuity described above)  implies that for almost every $p\in M$, $G$ has full measure in $\cW^s(p)$.  Pick such a $p$.  Then for almost every $q\in \cW^s(p)$, $\psi^s(q)=\psi^u(p)$;
defining $G'$ to be the union over all $q\in \cW^s(p)\cap G$ of $\cW^u(q)$, 
one obtains that $\psi^s$ is constant on $G\cap G'$.
But now, since $ \cW^s(p)\cap G$ has full measure in $\cW^s(p)$, the 
absolute continuity of $\cW^u$ (consequence (AC2) above) implies that 
$G'$ has full measure in a neighborhood of $p$.  Hence $\psi^s$ 
is a.e. constant in a neighborhood of $p$, completing the proof.
\end{proof}

\bigskip

\noindent{\bf SRB measures}

\medskip

In the absence of a smooth invariant measure, it is still possible for a map to have an
invariant measure that behaves naturally with respect to volume. In computer simulations
one observes such measures when one picks a point $x$ at random and plots many iterates
of $x$; in many systems, the resulting picture is surprisingly insensitive to the initial
choice of $x$.  What appears to be happening in these systems is that the trajectory
of almost every $x$ in an open set $U$ is converging to the support of a singular invariant probability measure
$\mu$.  Furthermore, for any open set $V$, the proportion of forward iterates of $x$ spent in  $V$ appears to
converge to $\mu(V)$ as the number of iterates tends to $\infty$.  

In the 1960's and 70's, Sinai, Ruelle and Bowen rigorously established
the existence of these physically observable measures
for hyperbolic attractors \cite{sinai, ruelle, bowen}. Such measures are now known
as Sinai-Ruelle-Bowen (SRB) measures, and have been shown to exist for non-hyperbolic
maps with some hyperbolic features.  Yhis subsection describes the construction
of SRB measures for hyperbolic attractors.

An $f$-invariant probability measure $\mu$ is called an {\em SRB (or physical) measure} if there
exists an open set $U\subset M$ containing the support of $\mu$ such that, 
for every continuous function $\phi\colon M\to \RR$ and $\lambda$-a.e. 
$x\in U$,
$$\lim_{n\to \infty} \frac{1}{n}\sum_{i=1}^n \phi(f^i(x)) = \int_M f\,d\mu.$$
The maximal open set $U$  with this property is called the {\em basin} of $f$.
To exclude the possibility that the SRB measure is supported on a periodic sink, one often adds the condition that at least one of the Lyapunov 
exponents of $f$ with respect $\mu$ is
positive. Other definitions of SRB measure have been proposed 
(see \cite{youngsrb}).
Note that every ergodic absolutely continuous invariant measure with positive density
in an open set is an SRB measure.  Note also
that an SRB measure for $f$ is not in general an SRB measure for $f^{-1}$, unless
$f$ preserves an ergodic absolutely continuous invariant measure.

Every transitive Anosov diffeomorphism carries a unique SRB measure.  To prove this, one defines a sequence
of probability measures $\nu_n$ on $M$ as follows.  Fix a point 
$p\in M$, and define $\nu_0$ to be the normalized
restriction of Riemannian volume to a ball $B^u$ in $\cW^u(p)$.  Set 
$\nu_n =\frac1n \sum_{i=1}^n f_\ast^i\nu_0$.
Distortion estimates show that the density of $\nu_n$ on its support 
inside $\cW^u$ is bounded, above and below, independently of $n$.
Passing to a subsequential weak* limit, one obtains a probability 
measure $\nu$ on $M$ with bounded densities on $\cW^u$-leaves. 

To show that $\nu$ is an SRB measure, choose a point $q\in M$ in 
the support of $\nu$.  
Since $\nu$ has positive density on unstable manifolds, almost every point in a neighborhood of $q$
in $\cW^u(q)$ is a regular point for $f$ (that is, a point where the forward Birkhoff averages of every continuous
function exist). A variation on the Hopf Argument, using the absolute continuity of $\cW^s$,
shows that $\nu$ is an ergodic SRB measure.

A similar argument shows that every transitive hyperbolic attractor admits an ergodic
SRB measure.  In fact this SRB measure has much stronger mixing properties, namely, it
is Bernoulli. To prove this, one first constructs a Markov partition \cite{bowenmarkov}
conjugating the action of $f$ to a Bernoulli shift.  This map sends the SRB measure to
a Gibbs state for a mixing Markov shift (see the entry on Equilibrium States in Ergodic Theory in this
volume).  A result that subsumes all of the results in this section is:

\begin{theorem}(Sinai, Ruelle, Bowen) 
Let $\Lambda\subset M$ be a transitive hyperbolic attractor for a $C^2$ map $f\colon M\to M$.  Then
$f$ has an ergodic SRB measure $\mu$ supported on $\Lambda$.  Moreover: the disintegration
of $\mu$ along unstable manifolds of $\Lambda$ is equivalent to the induced Riemannian volume,
the Lyapunov exponents of $\mu$ are all positive, and $\mu$ is Bernoulli.
\end{theorem}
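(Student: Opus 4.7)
The plan is to construct $\mu$ by the pushforward-and-average procedure outlined in the paragraph preceding the theorem, then verify the four conclusions in order of increasing difficulty.  Fix $p\in \Lambda$, let $D\subset \cW^u(p)$ be a small local unstable disk, and let $\nu_0$ denote the normalization of the induced Riemannian volume on $D$.  Define
$$\nu_n = \frac{1}{n}\sum_{i=0}^{n-1} f^i_*\nu_0$$
and let $\mu$ be any weak-$*$ subsequential limit.  Since $f(U)\subset \overline U$ and $\cW^u(p)\subset \Lambda$, each $\nu_n$ is supported on $\Lambda$, so $\mu$ is $f$-invariant and supported on $\Lambda$.

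The technical heart of the argument is a distortion estimate for the unstable Jacobian $J^u f(x) := \bigl|\det(D_xf|_{E^u(x)})\bigr|$, exactly analogous to Lemma~\ref{l.distortion}.  Along any single unstable leaf, $E^u$ is the tangent bundle of the leaf, which is as smooth as $f$, so $\log J^u f$ is Lipschitz continuous along leaves of $\cW^u$.  Iterates $f^{-i}$ contract $\cW^u$-distances by $C^{-1}\mu^{-i}$, and the same telescoping argument as in Lemma~\ref{l.distortion} yields a constant $K\geq 1$ with
$$K^{-1}\leq \frac{J^u f^n(x)}{J^u f^n(y)} \leq K$$
whenever $x,y$ lie on a common local unstable piece, uniformly in $n$.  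Consequently each density of $f^i_*\nu_0$ with respect to induced unstable volume on $f^i(D)$ is pinched between two universal positive constants.  Taking a weak-$*$ limit and applying Rokhlin's disintegration theorem along the unstable partition yields absolutely continuous conditionals for $\mu$ with densities pinched between two positive constants, which is conclusion (ii).

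Ergodicity and the SRB property now follow from the Hopf argument, just as in Anosov's theorem above, with the absolutely continuous unstable disintegration of $\mu$ playing the role that volume played in the conservative case.  Given a continuous $\phi$, Fundamental Principle $\#4$ shows $\overline\phi_f$ is constant along each $\cW^s$-leaf; the symmetric argument applied along unstable leaves equipped with their absolutely continuous conditionals shows $\overline\phi_f$ is also $\mu$-a.e.\ constant along $\cW^u$-leaves.  Transitivity of $f|_\Lambda$ forces every unstable leaf to be dense in $\Lambda$, and combined with absolute continuity of $\cW^s$ (Anosov-Sinai) this propagates the constancy throughout $\Lambda$, so $\overline\phi_f$ is $\mu$-a.e.\ constant and $\mu$ is ergodic.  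The same absolute-continuity transfer extends this constancy through stable holonomy to a Lebesgue-full subset of the attracting region $U$, which both identifies $U$ up to a null set as the basin and establishes $\mu$ as SRB.  The positive-exponent claim along $E^u$ is then immediate from the uniform bound $\|D_xf^n v\|\geq C^{-1}\mu^n\|v\|$ for $v\in E^u$.

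The genuinely hard step is Bernoullicity, which forces a departure from soft dynamics into symbolic coding.  The plan is to construct a Bowen-Sinai Markov partition of $\Lambda$ (\cite{bowenmarkov}), producing a H\"older semiconjugacy $\pi\colon \Sigma_A \to \Lambda$ from a topologically mixing subshift of finite type, where mixing follows from transitivity together with the spectral decomposition of hyperbolic attractors.  Under $\pi$, the measure $\mu$ pulls back to the unique equilibrium state for the H\"older potential $-\log J^u f\circ \pi$, i.e.\ a Gibbs measure.  The classical fact that mixing Gibbs states on subshifts of finite type are Bernoulli (Bowen, Ruelle, Ornstein-Weiss) then transfers through $\pi$ to give Bernoullicity of $\mu$.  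The main obstacle is the construction of the Markov partition itself --- rectangle estimates, local product structure on $\Lambda$, and H\"older regularity of the unstable-Jacobian cocycle under the coding --- all of which are standard but genuinely technical, and I would cite the relevant literature rather than attempt to reproduce these estimates here.
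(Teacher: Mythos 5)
Your proposal follows essentially the same route as the paper's own sketch: the pushforward-and-average construction from unstable volume with a Lemma~\ref{l.distortion}-style distortion estimate giving pinched densities on $\cW^u$-leaves, the Hopf argument combined with absolute continuity of $\cW^s$ for ergodicity and the SRB property, and a Markov partition reducing Bernoullicity to the theory of Gibbs states on mixing shifts of finite type, exactly as in the text's appeal to \cite{bowenmarkov}. Your formulation of the last step (semiconjugacy to a subshift of finite type plus Ornstein theory for the equilibrium state of $-\log J^u f$) is in fact slightly more precise than the paper's phrasing, but it is the same argument.
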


\section{Beyond uniform hyperbolicity}

The methods developed in the smooth ergodic theory of hyperbolic maps 
have been extended beyond the hyperbolic context.
Two natural generalizations of hyperbolicity are:
\begin{itemize}
 \item partial hyperbolicity, which requires uniform expansion of $E^u$ and uniform contraction of $E^s$, but allows central directions at each point, in which the expansion and contraction is dominated by the behavior in the hyperbolic directions; and
\item nonuniform hyperbolicity, which requires hyperbolicity along almost every orbit, but allows the expansion of $E^u$ and the contraction of $E^s$ to
weaken near the exceptional set where there is no hyperbolicity.
\end{itemize}
This section discusses both generalizations.

\bigskip

\noindent{\bf Partial hyperbolicity}

\medskip

Brin and Pesin \cite{brinpesin} and independently Pugh and Shub \cite{PSanosovactions}
first examined the ergodic properties of partially hyperbolic systems soon
after the work of Anosov and Sinai on hyperbolic systems. 
One says that a diffeomorphism 
$f\colon M\to M$ of a compact manifold $M$ is {\em partially hyperbolic}
if there is a nontrivial, continuous splitting of the tangent bundle,
$TM=E^s\oplus E^c\oplus E^u$, invariant under $Df$, such that
$E^s$ is uniformly contracted, $E^u$ is uniformly expanded, and $E^c$
is dominated, meaning that for some $n\geq 1$ and for all $x\in M$:
$$\|D_xf^n\vert_{E^s}\| < m(D_xf^n\vert_{E^c})\leq \|D_xf^n\vert_{E^c}\| < m(D_xf^n\vert_{E^u}).
$$

Partial hyperbolicity is a $C^1$-open condition: any diffeomorphism sufficiently $C^1$-close to a partially hyperbolic diffeomorphism is itself partially hyperbolic.  For an extensive discussion of examples of partially hyperbolic dynamical systems, see the survey article \cite{bpsw} and the book \cite{pesinlectures}. Among these examples are: the time-$1$ map of an Anosov flow, the frame flow for a compact manifold of negative sectional curvature, and many affine transformations of compact homogeneous spaces. All of these examples preserve the volume induced by a Riemannian  metric on $M$.  

As in the Anosov case, the stable and unstable bundles $E^s$ and $E^u$ of
a partially hyperbolic diffeomorphism are tangent to foliations, 
again denoted by $\cW^s$ and $\cW^u$ respectively \cite{brinpesin}. 
Brin-Pesin and Pugh-Shub proved that these foliations 
are absolutely continuous.

A partially hyperbolic diffeomorphism $f\colon M\to M$ is {\em accessible} if
any point in $M$ can be reached from any other along an {\em $su$-path}, which
is a concatenation of finitely many subpaths, 
each of which lies entirely in a single leaf of $\cW^s$ or a single leaf of $\cW^u$.
Accessibility is a global, topological property of the foliations
$\cW^u$ and $\cW^s$ that is the analogue of transversality of $\cW^u$ and $\cW^s$
for Anosov diffeomorphisms.  In fact, the transversality of these foliations
in the Anosov case immediately implies that every Anosov diffeomorphism is accessible.
Fundamental Principle $\#$4 suggests that accessibility might be related to
ergodicity for conservative systems.  

\bigskip

\noindent{\em Conservative partially hyperbolic diffeomorphisms}

\medskip

Motivated by a breakthrough result with Grayson \cite{gps},
Pugh and Shub conjectured that accessibility implies ergodicity,
for a $C^2$, partially hyperbolic conservative diffeomorphism \cite{psmontevideo}. 
This conjecture has been proved under the hypothesis of center bunching \cite{bw}, which
is a mild spectral condition on the restriction of $Df$ to the
center bundle $E^c$.  Center bunching is satisfied by most examples of interest, 
including all partially hyperbolic diffeomorphisms with $\hbox{dim}(E^c)=1$. 
The proof in \cite{bw} is a modification of the Hopf Argument using Lebesgue density points
and a delicate analysis of the geometric and measure-theoretic properties of
the stable and unstable foliations.

In the same article, Pugh and Shub also conjectured that 
accessibility is a widespread phenomenon, holding for an open and dense
set (in the $C^r$ topology) of partially hyperbolic diffeomorphisms.  This conjecture has
been proved completely for $r=1$ \cite{dw},  and for all $r$, with the additional assumption
that the central bundle $E^c$ is one dimensional \cite{hhu}.

Together, these two conjectures imply the third, central conjecture: 
in \cite{psmontevideo}:
\begin{conjecture}[Pugh--Shub] For any $r\geq 2$, the $C^r$, conservative partially hyperbolic
diffeomorphisms contain a $C^r$ open and dense set of ergodic diffeomorphisms.
\end{conjecture}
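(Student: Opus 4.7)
The plan is to reduce the conjecture to two sub-problems and then apply the Burns--Wilkinson ergodicity criterion. Let $\cP^r$ denote the space of $C^r$ conservative partially hyperbolic diffeomorphisms of $M$, equipped with the $C^r$ topology. By \cite{bw}, every center-bunched accessible $f\in\cP^r$ is ergodic; both accessibility and center bunching are $C^1$-open conditions, so the set of ergodic elements in $\cP^r$ contains the $C^1$-open set of all center-bunched accessible maps. It therefore suffices to show (i) that the center-bunched diffeomorphisms are $C^r$-dense in $\cP^r$, and (ii) that the accessible diffeomorphisms are $C^r$-dense in $\cP^r$.

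For (i), center bunching is a pointwise spectral inequality on $Df|_{E^c}$. I would enforce it by an arbitrarily $C^r$-small conservative perturbation supported in a flow box near a single periodic orbit, tuning the product of center eigenvalues along that orbit so as to bring the spectral gap into the bunched range. I expect this to follow from standard Kupka--Smale-type perturbation arguments in the conservative category.

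For (ii), the approach follows the scheme of Dolgopyat--Wilkinson \cite{dw} and Hertz--Hertz--Ures \cite{hhu}. Given $f\in\cP^r$ not known to be accessible, I would pick a point $p$ and construct a $C^r$-small conservative perturbation $g$ for which an $su$-loop holonomy based at $p$, viewed as a diffeomorphism of a small transversal through $p$, can be made to move $p$ in every direction of $E^c(p)$. The perturbation would be a composition of Hamiltonian bumps supported in flow boxes adapted to $\cW^u$ and $\cW^s$, chosen so that their combined $su$-holonomy acts locally transitively on the transversal. A connectedness argument inside the accessibility class of $p$ then gives full accessibility, and a further openness argument propagates this to a $C^r$-neighborhood.

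The main obstacle is step (ii) in the case $\dim E^c \geq 2$. A $C^1$-small perturbation has ample room to shift $su$-holonomies in any center direction, which is precisely what makes \cite{dw} succeed in the $C^1$ category. For $r\geq 2$, however, the higher derivatives of the perturbation must also stay small, so a single Hamiltonian bump only moves holonomies by amounts controlled by the derivatives of the bump, and generically this motion spans at most a rank-one subspace of $E^c(p)$. The $\dim E^c = 1$ case of \cite{hhu} works because one such bump already spans the center. For higher-dimensional center one must engineer multiple independent perturbations whose combined effect spans $E^c(p)$ while preserving volume and the dominated splitting, and arranging that these bumps do not mutually cancel at the holonomy level looks to be the genuinely hard part. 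It is exactly this coordination problem---not the reduction to accessibility, nor the bunching step---that I expect to be the main barrier, and it is why the full Pugh--Shub conjecture remains open despite the progress on each sub-conjecture recorded above.
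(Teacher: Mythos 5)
The statement you are addressing is a \emph{conjecture}: the paper records it as open (``The validity of this conjecture in the absence of center bunching is currently an open question''), offers no proof, and only cites the partial results \cite{bw}, \cite{dw}, \cite{hhu} that you also invoke. Your writeup is therefore not a proof and cannot be compared to one; it is a strategy sketch, and you yourself concede in the final paragraph that the coordination problem in step (ii) is unresolved. That honesty is to your credit, but a proposal whose key step is acknowledged to be the open part of an open conjecture is a gap by definition.

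Beyond that, step (i) contains a genuine error rather than merely a missing argument. Center bunching is a pointwise spectral inequality that must hold at \emph{every} point of $M$ (or at least on all of the relevant invariant set), comparing the strength of the hyperbolicity in $E^s\oplus E^u$ with the non-conformality of $Df|_{E^c}$. It is a $C^1$-open condition, but it is emphatically \emph{not} dense: a partially hyperbolic diffeomorphism whose center spectrum is spread far apart relative to its stable/unstable rates violates bunching robustly, and no $C^r$-small perturbation --- let alone one supported in a flow box near a single periodic orbit --- can repair the inequality on all of $M$, since small perturbations barely move the relevant rates and a local perturbation does nothing at points far from its support. The entire point of the Pugh--Shub conjecture as stated, and the reason it remains open after \cite{bw}, is precisely that one must prove ergodicity \emph{without} being able to assume or perturb in center bunching. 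So your reduction ``bunching dense $+$ accessibility dense $\Rightarrow$ conjecture'' starts from a false premise, and even granting it, your identified barrier in step (ii) (density of accessibility in the $C^r$ topology, $r\geq 2$, when $\dim E^c\geq 2$) is itself an open problem, proved only for $r=1$ in \cite{dw} and for one-dimensional center in \cite{hhu}.
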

The validity of this conjecture in the absence of center bunching is currently an open question.

\bigskip

\noindent{\em  Dissipative partially hyperbolic diffeomorphisms}

\medskip

There has been some progress in constructing SRB-type measures for dissipative partially
hyperbolic diffeomorphisms, but the theory is less developed than in the
conservative case.  Using the same construction as  for Anosov diffeomorphisms,
one can construct invariant probability measures that are smooth along
the $\cW^u$ foliation \cite{pesinsinai}.  Such measures are referred
to as $u$-Gibbs measures.  Since the stable bundle $E^s$ is not transverse
to the unstable bundle $E^u$, 
the Anosov argument cannot be carried through to show
that $u$-Gibbs measures are SRB measures.  

Nonetheless, there are conditions that imply
that a $u$-Gibbs measure is an SRB measure: for example, a $u$-Gibbs measure
is SRB if it is the unique $u$-Gibbs measure \cite{dimasrb}, if the bundle
$E^s\oplus E^c$ is nonuniformly contracted \cite{bv}, or if the bundle $E^u\oplus E^c$
is nonuniformly expanded \cite{abv}. The proofs of the latter two results use Pesin Theory,
which is explained in the next subsection.  

SRB measures have also been constructed in
systems where $E^c$ is nonuniformly hyperbolic \cite{burnsdolgpesin}, and
in (noninvertible) partially hyperbolic covering maps where $E^c$ is $1$-dimensional \cite{tsujii}.
It is not known whether accessibility plays a role in the existence of SRB measures
for dissipative, non-Anosov partially hyperbolic diffeomorphisms.

\bigskip

\noindent{\bf Nonuniform hyperbolicity}

\medskip

The concept of Lyapunov exponents gives a natural way to extend the
notion of hyperbolicity to systems that
behave hyperbolically, but in a nonuniform manner.  The fundamental principles 
described above, suitably modified, apply to these nonuniformly
hyperbolic systems and allow for the development of a smooth ergodic
theory for these systems.  This program was initially proposed and 
carried out by Yakov Pesin in the 1970's \cite{pesinnuh2}
and has come to be known as {\em Pesin theory}.

Oseledec's Theorem implies that if a smooth map $f$ satisfying the condition
$m(D_xf)>0$ preserves a probability measure $\nu$, then for $\nu$-a.e. $x\in M$
and every nonzero vector $v\in T_xM$, the limit
$$\lambda(x,v) = \lim_{n\to\infty}\frac{1}{n}\sum_{i=1}^n \log\|D_xf^i(v)\|$$
exists.  The number $\lambda(x,v)$ is called the {\em Lyapunov exponent at $x$ in the direction of $v$}.
For each such $x$, there are finitely many possible values for the exponent
$\lambda(x,v)$, and the function $x\mapsto \lambda(x,\cdot)$ is  measurable. 
See the discussion of Oseledec's Theorem in the entry on Ergodic Theorems in this volume.

Let $f$ be a smooth map.  An $f$-invariant probability measure $\mu$ is 
{\em hyperbolic} if the Lyapunov exponents of 
$\mu$-a.e. point are all nonzero. Observe that any invariant measure of a 
hyperbolic map is a hyperbolic measure.

A conservative diffeomorphism $f\colon M\to M$ is {\em nonuniformly hyperbolic} if
the invariant measure equivalent to volume is hyperbolic.
The term ``nonuniform'' is a bit misleading, as uniformly hyperbolic conservative systems
are also nonuniformly hyperbolic. Unlike uniform hyperbolicity, however,
nonuniform hyperbolicity allows for the {\em possibility} of different 
strengths of hyperbolicity along different orbits.

Nonuniformly hyperbolic diffeomorphisms exist on all manifolds \cite{katokbernoulli, dolgpesin}, and
there are $C^1$- open sets of nonuniformly hyperbolic diffeomorphisms that are
not Anosov diffeomorphisms \cite{sw}.  In general, it is a very difficult problem
to establish whether a given map carries a hyperbolic measure  that is nonsingular with
respect to volume.

\bigskip

\noindent{\em Hyperbolic blocks}

\medskip

As mentioned above, the derivative of $f$ along almost every orbit of a nonuniformly hyperbolic system 
looks like the derivative down the orbit of a uniformly hyperbolic system; the
nonuniformity can be detected only by examining a positive measure set of orbits. 
Recall that Lusin's Theorem in measure theory states that every Borel measurable function
is continuous on the complement of an arbitrarily small measure set.
A sort of analogue of Lusin's theorem holds for nonuniformly hyperbolic
maps: every $C^2$, nonuniformly hyperbolic diffeomorphism is uniformly 
hyperbolic on a (noninvariant) compact set whose complement has arbitrarily small
measure.  The precise formulation of this statement is omitted, 
but here are some of its salient features.

If $\mu$ is a hyperbolic measure for a $C^2$ diffeomorphism, then 
attached to $\mu$-a.e. point $x\in M$ are transverse, smooth stable and 
unstable manifolds for $f$. The collection of all stable manifolds is called
the {\em stable lamination} for $f$, and the collection of all unstable manifolds is called
the {\em unstable lamination} for $f$. 
The stable lamination is invariant under $f$, meaning that $f$ sends the stable manifold
at $x$ into the stable manifold for $f(x)$.  The stable manifold through $x$ is contracted
uniformly by all positive iterates of $f$ in a neighborhood of $x$.  Analogous statements
hold for the unstable manifold of $x$, with $f$ replaced by $f^{-1}$.

The following quantites vary measurably in $x\in M$:
\begin{itemize}
\item the (inner) radii of the stable and unstable manifolds through $x$,
\item the angle between stable and unstable manifolds at $x$, and
\item the rates of contraction in these manifolds.
\end{itemize}

The stable and unstable laminations of a nonuniformly hyperbolic system
are absolutely continuous. The precise definition of absolute continuity here is slightly
different than in the uniformly and partially hyperbolic setting, but the consequences
(AC1) and (AC2) of absolute continuity continue to hold. 

\bigskip

\noindent{\em Ergodic properties of nonuniformly hyperbolic diffeomorphisms}

\medskip

Since the stable and unstable laminations are absolutely continuous, the Hopf Argument can
be applied in this setting to show:

\begin{theorem}[Pesin] Let $f$ be $C^2$, conservative and nonuniformly hyperbolic.  
Then there exists a (mod 0) partition $\cP$
of $M$ into countably many $f$-invariant sets of positive volume 
such that the restriction of $f$ to each $P\in\cP$ is ergodic.
\end{theorem}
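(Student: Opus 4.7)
The plan is to adapt the Hopf Argument from the Anosov setting, replacing the globally defined, uniformly transverse foliations $\cW^s$, $\cW^u$ with the Pesin stable and unstable laminations, which are defined only almost everywhere and have non-uniform geometry. The overall shape of the proof is: partition $M$ (mod 0) into a countable increasing exhaustion by \emph{Pesin blocks} $\Lambda_1 \subset \Lambda_2 \subset \cdots$, on each of which the stable/unstable manifold geometry is uniform; then run the Hopf Argument locally at a Lebesgue density point of each block to show that the ergodic component through such a point has positive volume. Since $M$ has finite volume, only countably many such components can occur, and they partition $M$ mod 0.

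First I would make the reduction. To prove ergodicity of $f|_P$ on each piece, it suffices by approximation to show that for a countable dense family of continuous $\phi \colon M \to \RR$, the forward Birkhoff averages $\overline\phi_f$ are a.e. constant on $P$. By Fundamental Principle $\#$4, $\overline\phi_f$ is constant along stable equivalence classes, i.e.\ along leaves of the Pesin stable lamination $\cW^s$. Conservativity implies $\overline\phi_f = \overline\phi_{f^{-1}}$ almost everywhere, and the analogous statement for $f^{-1}$ gives that $\overline\phi_{f^{-1}}$ is constant along $\cW^u$-leaves; so off a null set, $\overline\phi_f$ is constant along both laminations simultaneously. The goal becomes to show that this joint constancy already forces local constancy (mod 0) near a.e.\ point.

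Next I would exhaust $M$ mod 0 by Pesin blocks $\Lambda_\ell$: compact sets on which the inner radii of the local stable and unstable manifolds, the angle between $E^s(x)$ and $E^u(x)$, and the uniform contraction/expansion rates in $\cW^s(x)$ and $\cW^u(x)$ are all bounded below by $1/\ell$, and above by $\ell$. Such blocks exist because all these quantities vary measurably in $x$, per the listed properties of the laminations. Fix $\ell$ and a Lebesgue density point $x_0$ of $\Lambda_\ell$. Uniform transversality and uniform inner radii on $\Lambda_\ell$ give local product-structure boxes $B$ around $x_0$, in which any two nearby points in $\Lambda_\ell$ can be joined by a two-leg $su$-zig-zag. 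Applying absolute continuity of $\cW^s$ (via (AC1)) inside $B$, one finds a full-measure set of points $p \in \cW^s_{\text{loc}}(x_0)$ at which $\overline\phi_f(p) = \overline\phi_f(x_0)$; then saturating by local unstable manifolds of these $p$, and using (AC2) to upgrade the $\cW^s$-full measure set into an $M$-full measure subset of $B$, one concludes that $\overline\phi_f \equiv \overline\phi_f(x_0)$ a.e.\ on $B$. A Vitali-type covering of $M$ by such boxes shows that the level sets of the tuple $(\overline\phi_f)_\phi$ are locally constant mod 0, hence decompose $M$ into at most countably many $f$-invariant sets of positive volume, each of which is ergodic.

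The main obstacle is the local step at $x_0$: in the nonuniform setting, the notion of absolute continuity is more delicate than for Anosov systems, because holonomy maps between transversals are defined only between measure-theoretic cross sections inside the boxes, and the local $\cW^u$-saturation of a $\cW^s$-leaf need not cover $B$ as a set, only up to measure zero. Reconciling a pointwise statement (constancy at $x_0$) with an almost-everywhere statement (Birkhoff equality, absolute continuity) requires that $x_0$ be chosen as a Lebesgue density point of $\Lambda_\ell$ and that one work exclusively with $\cW^s$- and $\cW^u$-leaves of points whose trajectories remain in $\Lambda_\ell$ with definite frequency; the uniform geometry on $\Lambda_\ell$ together with properties (AC1) and (AC2) is exactly what makes the zig-zag saturation have positive (in fact full) measure in a neighborhood of $x_0$. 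This interplay — uniformity on Pesin blocks, density points in measure, and holonomy absolute continuity — is the technical core of Pesin's theorem.
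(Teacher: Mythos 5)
Your route --- exhaustion of $M$ (mod 0) by Pesin blocks $\Lambda_\ell$ with uniform stable/unstable geometry, the Hopf Argument run at a Lebesgue density point $x_0$ of a block, and the consequences (AC1)/(AC2) of absolute continuity of the Pesin laminations --- is exactly the argument the paper intends (it simply invokes absolute continuity plus the Hopf Argument and points to Pugh--Shub's \emph{Ergodic attractors} for the exposition). There is, however, one genuine overreach in your local step: you assert that the $\cW^u$-saturation of $\cW^s_{\mathrm{loc}}(x_0)$ covers the product box $B$ up to measure zero, so that $\overline\phi_f$ is a.e.\ constant on all of $B$ (``positive (in fact full) measure in a neighborhood of $x_0$''). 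In the nonuniform setting absolute continuity is only available for the family of local stable and unstable manifolds through points of the block $\Lambda_\ell$, where sizes, angles and contraction rates are uniform; points of $B\setminus\Lambda_\ell$ may carry local unstable manifolds far too small to cross the relevant transversals, and nothing places them in the zig-zag class of $x_0$. What the saturation argument actually delivers is constancy of $\overline\phi_f$ (for your countable dense family of $\phi$'s) on essentially the union of local unstable leaves through $\Lambda_\ell\cap B$ --- a set of positive, indeed large, but not full measure in $B$. Full-measure constancy on a neighborhood would amount to openness mod 0 of the ergodic components (local ergodicity), which does not follow from absolute continuity alone and is not part of Pesin's theorem; accordingly your ``Vitali covering / locally constant mod 0'' conclusion is stronger than what can be proved this way.

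Fortunately the theorem does not need that stronger claim. Once you know that for a.e.\ $x_0$ the joint level set of $(\overline\phi_f)_\phi$ through $x_0$ --- which is $f$-invariant --- contains a set of positive volume, finiteness of the volume of $M$ forces there to be at most countably many such components, and they give the desired mod 0 partition into invariant sets of positive volume on which $f$ is ergodic. So replace ``a.e.\ on $B$'' and ``in fact full measure'' by ``on a positive-measure subset of $B$ containing the unstable saturation of $\Lambda_\ell\cap B$,'' and your proof coincides with the standard one.
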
 
The proof of this theorem is also exposited in \cite{psergatt}.
The countable partition can in examples be countably infinite;
nonuniform hyperbolicity alone does not imply ergodicity. 

\bigskip

\noindent{\em The dissipative case}

\medskip

As mentioned above, establishing the existence of a nonsingular hyperbolic
measure is a difficult problem in general.  In systems with some
global form of hyperbolicity, such as partial hyperbolicity,
it is sometimes possible to ``borrow'' the expansion from the unstable
direction and lend it to the central direction, via a small perturbation.
Nonuniformly hyperbolic attractors have been constructed in this way \cite{vianaihes}.  
This method is also behind the construction of a $C^1$ open set of nonuniformly
hyperbolic diffeomorphisms in \cite{sw}.

For a given system of interest, it is sometimes possible to prove that 
a given invariant measure is hyperbolic by establishing an {\em approximate}
form of hyperbolicity.  The idea, due to Wojtkowski and called the {\em cone method},
is to isolate a measurable bundle of cones in $TM$ defined over
the support of the measure, such that the cone at a point $x$ is mapped
by $D_xf$ into the cone at $f(x)$.  Intersecting the images of these cones under
all iterates of $Df$, one obtains
an invariant subbundle of $TM$ over the support of $f$ that is nonuniformly expanded.

Lai-Sang Young has developed a very general method \cite{youngtowers} for proving the
existence of SRB measures with strong mixing properties in systems that display ``some hyperbolicity.''
The idea is to isolate a region $X$ in the manifold where the first return map is
hyperbolic and distortion estimates hold.  If this can be done, then the map
carries a mixing, hyperbolic SRB measure.  The precise rate of mixing is determined by
the properties of the return-time function to $X$; the longer the return times, the slower
the rate of mixing.

More results on the existence of hyperbolic measures  are
discussed in the next section.

An important subject in smooth ergodic theory is the relationship between entropy, Lyapunov
exponents, and dimension of invariant measures of a smooth map.  Significant results in this area
include the Pesin entropy formula \cite{pesinnuh1entropy}, the Ruelle entropy inequality, \cite{ruelleentropy},
the entropy-exponents-dimension formula of Ledrappier-Young \cite{ledyoung1, ledyoung2},
and the proof by Barreira-Pesin-Schmeling that hyperbolic measures have a well-defined dimension \cite{barpessch}. 
The entry on Hyperbolic Dynamical Systems in this volume contains a discussion of these 
results; see this entry there for further information.


\section{The presence of critical points and other singularities}\label{s.critical}

Now for a discussion of the aforementioned technical difficulties that arise
in the presence of singularities and critical points for the derivative.
 
{\em Singularities}, that is, points where $Df$ (or even $f$) fails to be defined, 
arise naturally in the study of billiards and hard sphere gases.  The first
subsection discusses some progress made in smooth ergodic theory in the presence of singularities.

{\em Critical points}, that is, points where $Df$ fails to be invertible, appear 
inescapably in the study of noninvertible maps. This type of complication already shows 
up for noninvertible maps in dimension $1$, in the study of unimodal
maps of the interval. The second subsection discusses 
the technique of parameter exclusion, developed by
Jakobson, which allows for an ergodic analysis of a parametrized family of maps with
criticalities.

The technical advances used to overcome these issues in the interval
have turned out to have applications to dissipative, nonhyperbolic, 
diffeomorphisms in higher dimension, where the derivative is ``nearly critical''
in places.  The last subsection describes extensions of the parameter exclusion
technique to these near-critical maps.

\bigskip

\noindent{\bf Hyperbolic billiards and hard sphere gases}

\medskip

In the 1870's the physicist Ludwig Boltzmann hypothesized that in a mechanical system with many interacting particles, physical measurements (observables), averaged over time, will converge to their expected value as time approaches infinity. The underlying dynamical system in this statement is a Hamiltonian system with many
degrees of freedom, and the ``expected value''  is with respect to Liouville measure. 
Loosely phrased in modern terms, Boltzmann's hypothesis states that a generic Hamiltonian system of
this form will be ergodic on constant energy submanifolds.   Reasoning  that the time scales involved in measurement of
an observable in such a system are much larger than the rate of evolution of the system, Boltzmann's
hypothesis allowed him to assume that physical quantities associated to such a system 
behave like constants.

In 1963, Sinai revived and formalized this ergodic hypothesis, stating it in a concrete
formulation known as the Boltzmann-Sinai Ergodic Hypothesis.  In Sinai's formulation, the
particles were replaced by $N$ hard, elastic spheres, and to compactify the problem, he
situated the spheres on a $k$-torus, $k=2,3$.  The  Boltzmann-Sinai Ergodic Hypothesis is the conjecture
that the induced Hamiltonian system on the $2kN$-dimensional configuration space is ergodic
on constant energy manifolds, for any $N\geq 2$.  

Sinai verified this conjecture for $N=2$ by reducing the problem to a billiard map in the plane.  As background for Sinai's result, a brief discussion of
planar billiard maps follows.

Let $D\subset \RR^s$ be a connected region whose boundary $\partial D$ is a collection of closed, piecewise smooth simple curves  the plane. The {\em billiard map} is a map defined (almost everywhere)
on $\partial D \times [-\pi,\pi]$.  To define this map, one identifies each point  
$(x,\theta)\in \partial D \times [-\pi,\pi]$ with an inward-pointing tangent vector at
$x$ in the plane, so that the normal vector to $\partial D$ at $x$ corresponds to
the pair $(x,\pi/2)$.  This can be done in a unique way on the smooth components of $\partial D$.
Then $f(x, \theta)$ is obtained by following the ray originating at $(x,\theta)$ until
it strikes the boundary $\partial D$ for the first time at $(x',\theta')$.  Reflecting
this vector about the normal at $x'$, define $f(x,\theta) = (x',\pi-\theta')$.

It is not hard to see that the billiard map is conservative.  The billiard map is
piecewise smooth, but not in general smooth: the degree of smoothness of $f$ is one
less than the degree of smoothness of $\partial D$.  In addition to singularities
arising from the corners of the table, there are singularities arising in the
{\em second} derivative of $f$ at the tangent vectors to the boundary. 

In the billiards
studied studied by Sinai, the boundary $\partial D$ consists of a union
of concave circular arcs and straight line segments.  Similar billiards,
but with convex circular arcs, were first studied by Bunimovich \cite{bunimovich}.
Sinai and Bunimovich proved that these billiards are ergodic and
nonuniformly hyperbolic.
For the Boltzmann-Sinai problem with $N\geq 3$, the relevant associated
dynamical system is a higher dimensional billiard table in euclidean space,
with circular arcs replaced by cylindrical boundary components.  

In a planar billiard table with circular/flat boundary, the behavior of vectors encountering
a flat segment of boundary is easily understood, as is the behavior of vectors
meeting a circular segment in a neighborhood of the normal vector. 
If the billiard map is ergodic, however, every open
set of vectors will meet the singularities in the table infinitely many times. 
To establish the nonuniform hyperbolicity  of such billiard tables via conefieds, 
it is therefore necessary to understand precisely the fraction of time
orbits spend near these singularities. Furthermore, to use the Hopf argument
to establish ergodicity, one must avoid the singularities in the second derivative,
where distortion estimates break down. The techniques for overcoming
these obstacles involve imposing restrictions on the geometry of the 
table (even more so for higher dimensional tables),
and are well beyond the scope of this paper. 

The study of hyperbolic billiards and hard sphere gases has a long and involved history.
See the articles \cite{szasz} and \cite{chermark} for a survey of some of the results
and techniques in the area.  A discussion of methods in singular smooth ergodic theory,
with particular applications to the Lorentz attractor, can be found in \cite{arapac}.  Another, more classical, 
reference is \cite{kslp}, which contains a formulation
of properties on a critical set, due to Katok-Strelcyn, that are useful
in establishing ergodicity of systems with singularities.

\bigskip

\noindent{\bf Interval maps and parameter exclusion}

\medskip

The logistic family of maps $f_t\colon x\mapsto tx(1-x)$ defined on the interval $[0,1]$ is
very simple to define but exhibits an astonishing variety of dynamical features
as the parameter $t$ varies.  For small positive values of $t$, almost every point in $I$
is attracted under the map $f_t$ to the sink at $-1$.  For values of $t>4$, the map
has a repelling hyperbolic Cantor set. As the value of $t$ increases between $0$ and $4$, the
map $f_t$ undergoes a cascade of period-doubling bifurcations, in which a periodic
sink of period $2^n$ becomes repelling and a new sink of period $2^{n+1}$ is born. At
the accumulation of period doubling at $t\approx 3.57$, a periodic point of period $3$
appears, forcing the existence of periodic points of all periods.  The dynamics of $f_t$ for
$t$ close to $4$ has been the subject of intense inquiry in the last 20 years.

The map $f_t$, for $t$ close to $4$, shares some of the features of the doubling map $T_2$; 
it is $2$-to-$1$, except at the critical point $\frac12$, and it is uniformly expanding
in the complement of a neighborhood of this critical point. Because this neighborhood
of the critical point is not invariant, however, the only invariant sets on which $f_t$
is uniformly hyperbolic have measure zero.  Furthermore, the second derivative
of $f_t$ vanishes at the critical point, making it impossible to control
distortion for orbits that spend too much time near the critical point.

Despite these serious obstacles, Michael Jakobson \cite{jakobson} found 
a method for constructing absolutely continuous invariant measures for maps
in the logistic family.  The method has come to be known as {\em parameter exclusion}
and has seen application far beyond the logistic family.
As with billiards, it is possible to formulate geometric conditions on the map $f_t$
that control both expansion (hyperbolicity) and distortion on a positive measure set.
As these conditions involve understanding infinitely many iterates of $f_t$,
they are impossible to verify for a given parameter value $t$.

Using an inductive formulation of this condition, Jakobson showed that
the set of parameters $t$ near $4$ that fail to satisfy the condition at
iterate $n$ have exponentially small measure (in $n$).  He thereby showed
that for a positive Lebesgue measure set of parameter values $t$, the map $f_t$ has an
absolutely continuous invariant measure \cite{jakobson}.  This measure
is ergodic (mixing) and has a positive Lyapunov exponent.  The delicacy 
of Jakobson's approach is confirmed by the fact that for an open and dense 
set of parameter values, almost every orbit is attracted to
a periodic sink, and so  $f_t$ has no absolutely
continuous invariant measure \cite{swiatek, lyubich}.  Jakobson's method
applies not only to the logistic family but to a very general class
of $C^3$ one-parameter families of maps on the interval.

\bigskip

\noindent{\bf Near-critical diffeomorphisms}

\medskip

Jakobson's method in one dimension proved to extend to certain highly dissipative diffeomorphisms.
The seminal paper in this extension is due to Benedicks and Carleson; the method
has since been extended in a series of papers \cite{moraviana, benyoung, benvi}
and has been formulated in an abstract setting \cite{wangyoung}. 
 
This extension turns out to be highly nontrivial, but it is possible to 
describe informally the similarities
between the logistic family and higher-dimensional ``near critical'' diffeomorphisms. 
The diffeomorphisms to which this method applies are crudely hyperbolic with
a one dimensional unstable direction.  Roughly this means that in some invariant
region of the manifold, the image of a small ball under $f$ will be stretched
significantly in one direction and shrunk in all other directions.  The
directions of stretching and contraction are transverse in a large proportion
of the invariant region, but there are isolated ``near critical'' subregions where
expanding and contracting directions are nearly tangent.  

The dynamics of such a diffeomorphism
are very close to $1$-dimensional if the contraction is strong enough, and
the diffeomorphism resembles an interval map with isolated critcal points,
the critical points corresponding to the critical regions where stable
and unstable directions are tangent.

An illustration of this type of dynamics is the H\'enon family of 
maps $f_{a,b}\colon (x,y)\mapsto (1-ax^2+by, x)$,  the original
object of study in Benedicks-Carlesson's work. When the parameter
$b$ is set to $0$, the map $f_{a,b}$ is no longer a diffeomorphism, and
indeed is precisely a projection composed with the logisitic map.
For small values of $b$ and appropriate values of $a$, the H\'enon
map  is strongly dissipative and displays the near critical behavior
described in the previous paragraph.  In analogy to Jakobson's result,
there is a positive measure set of parameters near $b=0$ where
$f_{a,b}$ has a mixing, hyperbolic SRB measure.

See \cite{luzzatoviana} for a detailed exposition of the parameter exclusion method for H\'enon-like maps.

\section{Future directions}

In addition to the open problems discussed in the previous sections, there
are several general questions and problems worth mentioning: 
\begin{itemize}
\item What can be said about systems with everywhere
vanishing Lyapunov exponents?  Open sets of such systems
exist in arbitrary dimension.  Pesin theory carries into 
the nonuniformly hyperbolic setting the basic principles 
from uniformly hyperbolic theory (in particular, 
Fundamental Principles $\#3$ and $4$ above).  To what 
extent do properties of isometric and unipotent systems (for example,
Fundamental Principle $\#2$) extend to conservative 
systems all of whose Lyaponov exponents vanish?  
\item  Can one establish the existence of and analyze in general
the conservative systems on surfaces that have two positive measure
regimes: one where Lyapunov exponents vanish, and the other where they are 
nonzero?  Such systems are conjectured exist in the presence of KAM 
phenomena surrounding elliptic periodic points.
\item  On a related note, how common are conservative systems whose Lyapunov 
exponents are nonvanishing on a positive measure set?  See \cite{vianaexp}
for a discussion.
\item Find a broad description of those dissipative systems that 
admit finitely (or countably) many physical measures.  Are such systems 
dense among all dissipative systems, or possibly generic among a restricted 
class of systems?  See \cite{sw, palis} for several questions and conjectures
related to this problem.
\item Extend the methods in the study of systems with singularities
to other specific systems of interest, including the infinite dimensional
systems that arise in the study of partial differential equations.  
\item Carry the methods of smooth ergodic theory further into the study
of smooth actions of discrete groups (other than the integers) on manifolds.
When do such actions admit (possibly non-invariant) ``physical'' measures?
\end{itemize}
There are other interesting open areas of future inquiry, but this gives a 
good sample of the range of possibilities.

\end{document}